\newcommand{\qed}{\hfill $\Box $}
\newcommand{\pf}{\noindent {\bf Proof.} }
\newcommand{\N}{{\mathbb{N}}}
\newcommand{\E}{{\mathbb{E}}}
\newcommand{\Z}{{\mathbb{Z}}}
\renewcommand{\P}{{\mathbb{P}}}
\renewcommand{\S}{{\mathcal{S}}}
\newcommand{\F}{{\mathcal{F}}}
\newcommand{\T}{{\mathcal{T}}}
\newcommand{\I}{{\mathcal{I}}}
\newcommand{\bd}{{\mathrm{DEG}}}
\newcommand{\sd}{{\mathrm{deg}}}
\newtheorem{theorem}{Theorem}[section]
\newtheorem{lemma}[theorem]{Lemma}
\begin{document}

\title{Rainbow matchings for 3-uniform hypergraphs}
\author{Hongliang Lu\footnote{luhongliang@mail.xjtu.edu.cn; partially supported by the National Natural
Science Foundation of China under grant No.11871391 and
Fundamental Research Funds for the Central Universities}\\
School of Mathematics and Statistics\\
Xi'an Jiaotong University\\
Xi'an, Shaanxi 710049, China\\
\smallskip\\
Xingxing Yu\footnote{yu@math.gatech.edu; partially supported by NSF
  grant DMS-1600738} \ and Xiaofan Yuan\\
School of Mathematics\\
Georgia Institute of Technology\\
Atlanta, GA 30332, USA}
\date{}
\maketitle

\begin{abstract}
K\"{u}hn, Osthus,  and Treglown  and, independently,  Khan
proved that if
$H$ is a $3$-uniform hypergraph with $n$ vertices such that $n\in 3\Z$ and large,
and $\delta_1(H)>{n-1\choose 2}-{2n/3\choose 2}$, then $H$ contains a perfect matching.
In this paper, we show that for $n\in 3\Z$ sufficiently large, if $F_1, \ldots, F_{n/3}$ are 3-uniform hypergrapghs
with a common vertex set and
$\delta_1(F_i)>{n-1\choose 2}-{2n/3\choose 2}$ for $i\in [n/3]$, then
$\{F_1,\dots, F_{n/3}\}$ admits a rainbow matching, i.e., a matching consisting of one edge from each $F_i$.
This is done by converting the rainbow matching problem to a perfect matching problem in a special class of uniform hypergraphs.

\end{abstract}

\newpage

\section{Introduction}

For any positive integer $k$ and any set $S$, let $[k]:=\{1,\ldots, k\}$ and
${S\choose k}:=\{T\subseteq S: |T|=k\}$. A {\it hypergraph} $H$
consists of a vertex set $V(H)$ and an edge set $E(H)\subseteq
2^{V(H)}$, and we write $e(H):=|E(H)|$ and often identify $E(H)$ with $H$.
For a positive integer $k$, a hypergraph $H$ is said to be {\it $k$-uniform}
if $E(H)\subseteq {V(H)\choose k}$, and a $k$-uniform hypergraph is also
called a {\it $k$-graph}.

A \emph{matching} in a hypergraph $H$ is a set of pairwise disjoint
edges in $H$, and we use $\nu(H)$ to denote the maximum size of a
matching in $H$. The problem for finding maximum matchings  in hypergraphs is NP-hard,
even for 3-graphs \cite{Ka72}. It is of interest to find good sufficient
conditions that guarantee large matchings.

Erd\H{o}s~\cite{Erdos65} conjectured in 1965 that,
for positive integers $k,n,t$, if $H$ is a  $k$-graph on
$n$ vertices and  $\nu(H) <
t$ then  $e(H)\leq \max \left\{{kt-1\choose k}, {n\choose k}-{n-t+1\choose k}\right\}.$
This bound is tight because of the complete $k$-graph on $kt-1$ vertices and the $k$-graph on $n$ vertices in which every
edge intersects a fixed set of $t-1$ vertices. For recent
progress on this conjecture, see \cite{AHS12,AFH12,FLM,Fr13,Fr17,HLS,LM}. In particular,  Frankl
\cite{Fr13} proved that if  $n\geq (2t-1)k-(t-1)$ and $\nu(H)<t$ then $e(H)\le
{n\choose k}-{n-t+1\choose k}$. This result was further improved by Frankl
and Kupavskii \cite{FrK19+}.

There has been extensive
study on degree conditions for large matchings in uniform hypergraphs.
Let $H$ be a hypergraph and $T\subseteq V(H)$.
The {\it degree}
of $T$ in $H$, denoted by $d_H(T)$, is the number of edges in $H$
containing $T$. For any integer  $l\ge 0$, let $\delta_l(H):=
\min\{d_H(T): T\in {V(H)\choose l}\}$ denote the minimum
{\it $l$-degree} of $H$. Hence,  $\delta_0(H)=e(H)$.  Note that $\delta_1(H)$ is often called the minimum {\it
vertex} degree of $H$.

For integers $n,k,d$ satisfying $0 \leq d \leq k-1$ and $n\in k\Z$,
let $m_d(k,n)$ denote the minimum integer $m$ such that every
$k$-graph $G$ on $n$ vertices with $\delta (G)\ge m$
has a perfect matching.  K\"uhn, Osthus and Treglown
\cite{KOT13} and, independently,
Khan \cite{Kh13} determined $m_1(k,n)$  for $k=3$ and large $n$.
 Khan \cite{Kh16} also determined $m_1(k,n)$  for $k=4$ and large $n$. For $d=k-1$, $m_{k-1}(k,n)$ was determined for large $n$ by R\"odl,
Ruci\'nski and Szemer\'edi [20]. This was generalized by Treglown and
Zhao \cite{TZ13} to the range $k/2\le d\le k-1$, where they also determined the extremal
families.

There are  attempts to extend the above conjecture of  Erd\H{o}s to a family
of hypergraphs. Let $\mathcal{F} = \{F_1,\ldots, F_t\}$ be a family
of hypergraphs. A set of pairwise disjoint edges, one from
each $F_i$, is called a \emph{rainbow matching} for $\mathcal{F}$. (In this situation, we also say that ${\cal F}$ or
$\{F_1,\ldots, F_t\}$ {\it admits} a rainbow matching.) Aharoni and
 Howard \cite{AH} made the following conjecture, which first appeared
 in  Huang, Loh, and Sudakov \cite{HLS}:
Let $t$ be a positive integer and ${\cal F}=\{F_1,\ldots,F_t\}$ such that, for $i\in [t]$,
$F_i\subseteq {[n]\choose k}$ and
$e(F_i)> \max\left\{{kt-1\choose k}, {n\choose k}-{n-t+1\choose k}\right\};$
 then  ${\cal F}$ admits a rainbow matching.
Huang, Loh, and Sudakov \cite{HLS} showed that this conjecture  holds when $t< n/3k^2$.

 In this paper, we prove a degree version of the above conjecture for rainbow matchings, which extends the results of
  K\"uhn, Osthus, and Treglown
\cite{KOT13} and, independently, of
Khan \cite{Kh13} for 3-graphs to families of 3-graphs.

\begin{theorem}\label{rainbow-pm}
Let $n\in 3\mathbb{Z}$ be positive and sufficiently large and let ${\cal F}=\{F_1,\ldots,F_{n/3}\}$ be a family of $n$-vertex 3-graphs such that
$V(F_i)=V(F_1)$ for $i\in [n/3]$.  If $\delta_1(F_i)>{n-1\choose
  2}-{2n/3\choose 2}$ for $i\in [n/3]$, then ${\cal F}$ admits a rainbow matching.
\end{theorem}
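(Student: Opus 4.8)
\medskip
\noindent\textbf{Proof idea.}
The strategy is to encode rainbow matchings for $\F$ as perfect matchings of an auxiliary $4$‑graph and then to produce such a perfect matching by the absorption method, splitting into a non‑extremal case and an extremal case.

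\emph{The reduction.} Put $V=V(F_1)$ and let $C=\{c_1,\dots,c_{n/3}\}$ be a set of $n/3$ new vertices disjoint from $V$. Let $H$ be the $4$‑graph with vertex set $V\cup C$ and edge set $\{e\cup\{c_i\}:i\in[n/3],\ e\in F_i\}$. Then $|V\cup C|=4n/3$, every edge of $H$ meets $C$ in exactly one vertex, and a perfect matching of $H$ uses each $c_i$ exactly once and each vertex of $V$ exactly once; deleting the $c_i$'s from its edges yields precisely a rainbow matching for $\F$, and conversely. The degree hypothesis becomes $d_H(\{v,c_i\})=d_{F_i}(v)>\binom{n-1}{2}-\binom{2n/3}{2}$ for all $v\in V$ and $i\in[n/3]$. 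So it suffices to show: a $4$‑graph $H$ on $V\cup C$ with $|V|=3|C|=n$, in which every edge meets $C$ in exactly one vertex and $d_H(\{v,c\})>\binom{n-1}{2}-\binom{2n/3}{2}$ for all $v\in V$, $c\in C$, has a perfect matching; this is the ``special class of uniform hypergraphs'' of the abstract. The bound is tight because of the space barrier: if $A\subseteq V$ with $|A|=n/3-1$ and each $F_i$ is the set of all triples meeting $A$, then $\delta_1(F_i)=\binom{n-1}{2}-\binom{2n/3}{2}$ while any rainbow matching would need $n/3>|A|$ pairwise disjoint edges each meeting $A$.

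\emph{Non‑extremal case.} Fix a small constant $\varepsilon>0$ and suppose $\F$ is not $\varepsilon$‑close to the configuration above, meaning roughly that no set $A\subseteq V$ with $|A|\le n/3$ has the property that for all but at most $\varepsilon n$ indices $i$ the hypergraph $F_i$ agrees with ``all triples meeting $A$'' up to at most $\varepsilon n^3$ edges. Then I would run the absorption argument in $H$. First, an \emph{absorbing lemma}: $H$ contains a matching $\M_0$ with $|V(\M_0)|=o(n)$ such that for every balanced set $W\subseteq(V\cup C)\setminus V(\M_0)$ (balanced meaning $|W\cap V|=3|W\cap C|$, which holds automatically for leftovers) with $|W|=o(n)$, the subhypergraph $H[V(\M_0)\cup W]$ has a perfect matching; one proves this in the usual way by exhibiting polynomially many absorbers (short matchings in $H$ that can swallow $W$), counted using the codegree condition on $V$–$C$ pairs, and then choosing $\M_0$ at random. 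Second, in $H-V(\M_0)$ one finds an \emph{almost perfect matching} covering all but $o(n)$ vertices, e.g.\ from a near‑optimal fractional matching rounded off with the help of the degree condition, or by a random greedy process. Since the leftover is balanced, $\M_0$ absorbs it and we obtain a perfect matching of $H$. Non‑extremality is used crucially in both steps: it keeps the absorber counts positive and lets the almost‑perfect‑matching step push past the obstruction (a naive greedy stalls at roughly $n/9$ edges) created by the space barrier.

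\emph{Extremal case.} Suppose instead that $\F$ is $\varepsilon$‑close to the extremal configuration. A short stability analysis then pins down a common set $A$ of size essentially $n/3$ — the strict inequality $\delta_1(F_i)>\binom{n-1}{2}-\binom{2n/3}{2}$ prevents $|A|$ from being much smaller — such that almost every $F_i$ contains almost every triple with one vertex in $A$ and two in $B:=V\setminus A$, and, when $|A|=n/3-1$, at least one edge inside $B$. I would then build the rainbow matching directly: handle the $O(\varepsilon n)$ exceptional colours and vertices by hand using the abundance of available edges, and finish by choosing, for each remaining colour, an edge with exactly one vertex in $A$ and two in $B$ (together with one edge inside $B$ for a single colour if $|A|=n/3-1$). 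That these edges can be chosen pairwise disjoint and with distinct colours is a routine Hall/greedy argument, since almost every $F_i$ contains almost all of the relevant triples.

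\emph{Main obstacle.} I expect the crux to be the absorbing lemma in the non‑extremal case: the $4$‑graph $H$ carries only a one‑sided codegree condition (on $V$–$C$ pairs, with nothing forced for pairs inside $V$), so the general perfect‑matching machinery for $4$‑graphs does not apply, and the extremal space barrier sits exactly at the threshold. One must design the absorber gadgets and phrase the non‑extremality hypothesis precisely enough that both the absorber count and the almost‑perfect‑matching step survive. A secondary delicate point is the extremal case, where the bound is tight and the strict degree inequality has to be exploited sharply, in particular to locate the edge(s) inside $B$ and to make the colour assignment feasible.
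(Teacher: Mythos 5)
Your overall architecture is exactly the paper's: the auxiliary $4$-graph you build is precisely $H_{1,3}(\mathcal{F})$, the reduction is observation $(i)$ plus Theorem~\ref{prefect-(1,3)}, and the split into an extremal case (closeness to the space-barrier structure $H_{1,3}(n,n/3)$) and a non-extremal case handled by absorption plus an almost perfect matching is the paper's plan. The genuine gap is the almost-perfect-matching step in the non-extremal case, which you dispatch with ``a near-optimal fractional matching rounded off with the help of the degree condition, or by a random greedy process.'' Neither suggestion works as stated at this threshold. There is no codegree condition on pairs inside $P$, so the one-sided $P$--$Q$ codegree bound does not by itself yield a near-perfect fractional matching once you pass to pieces of $H$ (the degree condition degrades below the extremal value $\binom{n-1}{2}-\binom{2n/3}{2}$, exactly where $H(n,n/3-1)$ shows fractional perfect matchings can fail), and a random greedy process stalls for the same reason. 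The paper has to do real work here: Lemma~\ref{dense} converts non-extremality into the statement that every large balanced vertex set spans many edges; the container theorem (Lemma~\ref{thm2.2}, via Lemma~\ref{indep}) transfers this to the random slices $R^i$, bounding their balanced independent sets; Lemma~\ref{pfrac} (using the stability Lemma~\ref{3-graph-frac} for the shifted link graph) then produces a perfect fractional matching in each balanced slice; and finally the two-round randomization of Alon et al.\ together with Pippenger--Spencer (Lemmas~\ref{indsub}--\ref{nibble}), with the extra care of keeping the slices balanced, turns these fractional matchings into an almost perfect integral matching. You correctly flag that non-extremality must enter this step, but you do not supply any mechanism for it, and that mechanism is the mathematical core of the paper.

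Two secondary mis-calibrations are worth noting. First, the absorbing lemma does not need non-extremality: the paper proves it (Lemma~\ref{absorbing}) from the degree hypothesis alone, the key point being that for $u,w\in P$ one has $|N_H(u)\cap N_H(w)|\ge \frac{2n}{3}\bigl(\binom{n-1}{2}-\binom{2n/3}{2}\bigr)-\frac{n}{3}\binom{n}{2}=\Omega(n^3)$, so the absorber count is positive unconditionally; your claim that non-extremality ``keeps the absorber counts positive'' suggests this count was not checked. Second, the extremal case is more than a ``routine Hall/greedy argument'': because bad vertices in $Q$ must be matched while avoiding a reserved part of $W$, the paper needs a separate induction lemma producing rainbow matchings for small families of $3$-graphs (Lemma~\ref{HZ-16}), followed by a careful sequence of matchings ($M_0',M_1,M_2,M_2'$) that rebalances $W$ before the greedy finish (Lemma~\ref{good}). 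So the skeleton of your proposal is the right one, but the two places you label as obstacles are indeed where the content lies, and the proposal does not yet contain the ideas (containers plus fractional matchings of balanced random slices, and the small-family rainbow lemma) needed to close them.
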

The bound on $\delta_1(F_i)$ in Theorem~\ref{rainbow-pm} is sharp.
To see this, let $m\leq n/3$ and let $H(n,m)$ denote a 3-graph that is isomorphic to the 3-graph with vertex set  $[n]$ and edge set
\[
\left\{e\in {[n]\choose 3}  :  e\not\subseteq [m] \mbox{ and } e\cap [m]\neq \emptyset\right\}.
\]
Note that for $n\in 3\Z$, $\delta_1(H(n,n/3-1))={n-1\choose 2}-{2n/3\choose 2}$
and $H(n,n/3-1)$ has no perfect matching. Hence, the family
of $n/3$ copies of  $H(n,n/3-1)$ admits no rainbow matching.

To prove Theorem~\ref{rainbow-pm}, we convert this rainbow matching
problem to a perfect matching problem for a special class of
hypergraphs. For any integer $k\ge 2$, a $k$-graph $H$ is \emph{$(1,k-1)$-partite }if there exists a
partition of $V(H)$ into sets $V_1, V_2$ (called {\it partition  classes}) such that for any $e\in E(H)$, $|e\cap V_1|=1$ and $|e\cap V_2|=k-1$.
A $(1,k-1)$-partite $k$-graph with partition classes $V_1,V_2$ is
\emph{balanced} if $(k-1)|V_1|=|V_2|$

 Let  $n\in 3\Z$, $Q=\{v_1,\ldots,v_{n/3}\}$ be a set of vertices, and
 $\mathcal{F}=\{F_1,\ldots,F_{n/3}\}$ be a family of $n$-vertex 3-graphs with common
 vertex set $P$ that is disjoint from $Q$. We use  $H_{1,3}(\mathcal{F})$ to represent  the balanced
 $(1,3)$-partite 4-graph with patition classes $Q, P$
and edge set $\bigcup_{i=1}^{n/3}E_i$, where $E_i=\{e\cup
\{v_i\} :  e\in E(F_i)\}$ for $i\in [n/3]$. If, for some $H(n,n/3)$ on $V(F_i)$,  $F_i=H(n,n/3)$ for $i\in [n/3]$,
then we write $H_{1,3}(n,n/3)$ for $H_{1,3}(\mathcal{F})$. The following observations will be useful:
\begin{itemize}
   \item [$(i)$] $E(F_i)$ is the neighborhood of $v_i$ in
     $H_{1,3}(\F)$ for $i\in [n/3]$,
     and ${\cal F}$ admits a rainbow matching if, and only if,
     $H_{1,3}({\cal F})$ has a perfect matching.
   \item [$(ii)$] $e(F_i)\ge \frac{n}{3}\delta_1(F_i)$ for all $i\in [n/3]$, and $d_{H_{1,3}(\F)}(v)\ge \sum_{i=1}^{n/3}\delta_1(F_i)$ for $v\in P$.
     \item [$(iii)$]  $d_{H_{1,3}(\F)}(\{u,v\})\ge  {n-1\choose
       2}-{2n/3\choose 2}+1$  for all $u\in P$ and $v\in Q$
     if $\delta_1(F_i)\ge {n-1\choose 2}-{2n/3\choose 2}+1$ for $i\in
     [n/3]$.
  \item [$(iv)$]  $\delta_1(H_{1,3}({\cal F}))\geq
     \frac{n}{3}\left({n-1\choose 2}-{2n/3\choose 2}+1\right)$ if $d_{H_{1,3}(\F)}(\{u,v\})\ge  {n-1\choose
       2}-{2n/3\choose 2}+1$  for
     all $u\in P$ and $v\in Q$.

 \end{itemize}

By observations $(i)$ and $(iii)$, Theorem \ref{rainbow-pm} follows from the following result.

\begin{theorem}\label{prefect-(1,3)}
 Let $n\in 3\Z$ be a positive and sufficiently large, and let $H$ be a
 $(1,3)$-partite $4$-graph with partition classes $Q,P$ such that
 $3|Q|=|P|=n$. Suppose $d_{H}(\{u,v\})\ge  {n-1\choose
       2}-{2n/3\choose 2}+1$  for
     all $u\in P$ and $v\in Q$.
Then $H$ has a perfect matching.
\end{theorem}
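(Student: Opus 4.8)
The plan is to prove Theorem~\ref{prefect-(1,3)} via the absorption method, which has become the standard tool for perfect-matching problems under degree conditions. First I would establish an \emph{absorbing lemma}: there is a small matching $M_{\mathrm{abs}}$ in $H$ (of size, say, $O(\epsilon n)$) with the property that for every balanced set $W$ with $|W\cap Q|$ small, $|W\cap P|=3|W\cap Q|$, and $V(M_{\mathrm{abs}})\cup W$ of the right proportions, the induced subhypergraph $H[V(M_{\mathrm{abs}})\cup W]$ has a perfect matching. The construction of $M_{\mathrm{abs}}$ rests on showing that for a ``typical'' $4$-set $\{v\}\cup\{x_1,x_2,x_3\}$ with $v\in Q$, $x_i\in P$, there are many \emph{absorbers}: configurations of a few edges of $H$ that cover exactly these four vertices together with some auxiliary vertices, and that remain coverable after those four vertices are removed. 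Because $H$ is $(1,3)$-partite and the codegree condition is on pairs $(u,v)$ with $u\in P$, $v\in Q$, I expect an absorber for $\{v;x_1,x_2,x_3\}$ to consist of a constant number of edges using one ``extra'' $Q$-vertex and three ``extra'' $P$-vertices; counting these via the codegree bound and then invoking a standard probabilistic selection argument yields $M_{\mathrm{abs}}$.

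Second, after removing $V(M_{\mathrm{abs}})$, I would show that the leftover hypergraph $H'$ — still $(1,3)$-partite, still balanced up to the absorbed vertices, and with only slightly weakened codegree — has an \emph{almost perfect matching} covering all but $O(\epsilon n)$ vertices of each class. The cleanest route here is a reduction to the fractional/near-perfect matching regime: either apply a hypergraph matching result in the spirit of the Erd\H{o}s conjecture progress cited in the introduction (e.g.\ Frankl's theorem), or run a direct greedy/random-greedy argument exploiting the fact that the codegree condition ${n-1\choose 2}-{2n/3\choose 2}+1$ is exactly the threshold at which the extremal example $H_{1,3}(n,n/3-1)$ fails, so away from that example one has positive density to greedily extend. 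Combining $M_{\mathrm{abs}}$ with the near-perfect matching of $H'$ leaves an uncovered balanced set $W$ small enough to be swallowed by the absorbing property, producing a perfect matching of $H$.

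The main obstacle, as is usual with this method, is the absorbing lemma — specifically, verifying that \emph{every} valid small balanced leftover set $W$ can be absorbed, not just that absorbers exist for individual $4$-tuples. The subtlety is the bipartite-type structure: a leftover vertex in $Q$ must be matched using three $P$-vertices, so $W$ must stay proportioned $|W\cap P|=3|W\cap Q|$ throughout, and the absorbers must be designed (and their number lower-bounded) so that one can absorb the $Q$-part and the $P$-part in a coordinated way. I would handle this by building absorbers that are themselves short $(1,3)$-partite paths or ``gadgets'' absorbing one $Q$-vertex and three $P$-vertices at a time, proving a robust supply of them via the codegree hypothesis, and then chaining. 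A secondary difficulty is checking that the codegree bound genuinely suffices — i.e.\ that the only obstruction is the space-barrier example $H(n,n/3-1)$; this should follow by an extremal-case analysis separating ``$H$ is close to $H_{1,3}(n,n/3-1)$'' (handled by a direct stability argument exploiting that the deficiency is exactly $1$) from ``$H$ is far from it'' (handled by the absorption argument above with room to spare).
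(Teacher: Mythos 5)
Your skeleton (absorbing matching for balanced $4$-sets with one $Q$-vertex and three $P$-vertices, plus an extremal/non-extremal dichotomy) matches the paper's strategy, and your absorbing step is essentially the paper's Lemma~\ref{absorbing}. The genuine gap is in the second step, the almost perfect matching in $H'=H-V(M_{\mathrm{abs}})$. Neither of your proposed routes works at this degree level. The codegree hypothesis gives pair degrees of order ${n-1\choose 2}-{2n/3\choose 2}\approx\frac{5}{18}n^2$, i.e.\ density about $5/9$ of the complete balanced $(1,3)$-partite $4$-graph; translated into an edge count this is well below what Erd\H{o}s--Frankl-type results require to force a matching covering all but $\sigma n$ vertices, so ``apply Frankl's theorem'' does not apply. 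A greedy or random-greedy argument fails for a structural reason: being far from $H_{1,3}(n,n/3)$ only guarantees (Lemma~\ref{dense}) that vertex sets containing almost \emph{all} of each class ($\ge(1/3-\varepsilon/8)n$ of $Q$ and $\ge(2/3-\varepsilon/8)n$ of $P$) span many edges, and the codegree bound by itself permits the leftover set to become independent once a constant fraction of the vertices has been matched (the extremal example $H(n,n/3-1)$ shows exactly this kind of sparse obstruction). So ``positive density to greedily extend'' stalls long before only $\sigma n$ vertices remain, and your proposal has no mechanism to get past that point.

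What the paper actually does there is the technically heavy part you have skipped: a two-round randomization in the style of Alon--Frankl--Huang--R\"odl--Ruci\'nski--Sudakov, adapted so that the random slices $R^i$ are \emph{balanced} $(1,3)$-partite subgraphs (Lemmas~\ref{indsub}, \ref{balanced}); a hypergraph-container argument (Lemmas~\ref{thm2.2}--\ref{indep}) showing that, in the non-extremal case, each slice has no large balanced independent set; a lemma (Lemma~\ref{pfrac}, via a stability result for the stable link $2$-graph) showing that such slices admit perfect \emph{fractional} matchings; and finally a second randomization plus the Pippenger--Spencer nibble (Lemmas~\ref{secrd}, \ref{nibble}) to convert these fractional matchings into an almost perfect matching of $H'$. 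Your extremal case is also lighter than what is needed: the paper handles closeness to $H_{1,3}(n,n/3)$ by first covering the bad vertices, using an inductively proved rainbow-matching lemma for small families (Lemma~\ref{HZ-16}) and careful balancing between the classes $U$ and $W$, before finishing greedily among good vertices (Lemmas~\ref{good}, \ref{close}); ``a direct stability argument exploiting that the deficiency is exactly $1$'' is not yet an argument. As it stands, the proposal is a correct outline of the absorption framework but is missing the key ideas that make the almost-perfect-matching step true under this codegree condition.
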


To prove Theorem~\ref{prefect-(1,3)},  we take the usual approach by  considering
whether or not $H$ is close to some $H_{1,3}(n,n/3)$ on the same vertex set.
Given $\varepsilon>0$ and two $k$-graphs $H_1,H_2$ with
$V(H_1)=V(H_2)$, we say that $H_2$ is $\varepsilon$-close to $H_1$
 if $|E(H_1)\setminus E(H_2)|<\varepsilon |V(H_1)|^k$.

In Section 2, we prove Theorem~\ref{prefect-(1,3)} when $H$ is close to
some $H_{1,3}(n,n/3)$, using the structure of $H_{1,3}(n,n/3)$
to find a perfect matching in $H$ greedily. This is the extremal case, as $H_{1,3}(n,n/3)$ is an extremal graph for
Theorem~\ref{prefect-(1,3)}.

In the non-extremal case, $H$ is not close to  any $H_{1,3}(n,n/3)$ on $V(H)$.
We first find a small matching $M'$ in $H$ that can be used to ``absorb" small sets of
vertices, then find an almost perfect matching $M''$ in $H-V(M')$, and  finally use $M'$ to absorb $V(H)\setminus V(M'\cup M'')$.
A more detailed account is given in Section 6.

In Section 3, we prove an  absorbing lemma for $(1,3)$-partite $4$-graphs, which can be used to find the absorbing matching $M'$.
In Section 5, we find the almost perfect matching $M''$ in $H-V(M')$. For this,
we use the approach of Alon et al. in \cite{AFH12} to find random subgraphs with desired properties (including the existence of
perfect fractional matchings). However, we need to modify this
approach to make it work, which is done in Section 4.
First, we need the random subgraphs to be balanced. Second, in the non-extremal case,
the (1,3)-partite 4-graphs do not have large sparse sets; so we also need to
control the independence number of those random subgraphs and for this we use
the hypergraph container result of Balogh  {\it et al}. \cite{BMS15}.

\section{The extremal case}

In this section, we prove Theorem~\ref{prefect-(1,3)}  for the case when $H$ is close to some
$H_{1,3}(n,n/3)$ on $V(H)$. First, we prove a result on rainbow matchings for a small family of
hypergraphs, which will serve as induction basis for our proof.

\begin{lemma} \label{HZ-16}
Let $n,t, k$ be positive integers such that $n> 2k^4 t$. Let $F_i$, $i\in [t]$,
be $n$-vertex $k$-graphs with a common vertex set.
If $\delta_1(F_i)>{n-1\choose k-1}-{n-t \choose k-1}$ for $i\in [t]$ then $\{F_1, \ldots, F_t\}$ admits a rainbow matching.
\end{lemma}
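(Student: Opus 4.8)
The plan is to argue by induction on $t$, peeling off one edge at a time. The base case $t=1$ is immediate since $\delta_1(F_1)>0$ forces $E(F_1)\neq\emptyset$. For the inductive step, suppose the statement holds for $t-1$ and we are given $F_1,\ldots,F_t$ with $\delta_1(F_i)>{n-1\choose k-1}-{n-t\choose k-1}$ for all $i\in[t]$. The idea is to select one edge $e_t\in E(F_t)$ judiciously, delete the $k$ vertices of $e_t$ from the common vertex set, and check that the restricted $k$-graphs $F_1',\ldots,F_{t-1}'$ on the remaining $n-k$ vertices still satisfy a minimum-degree hypothesis strong enough to invoke the inductive hypothesis (with parameters $t-1$ and $n-k$). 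A rainbow matching for $\{F_1',\ldots,F_{t-1}'\}$ together with $e_t$ is then a rainbow matching for $\{F_1,\ldots,F_t\}$.

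The key quantitative point is the degree bookkeeping. For a fixed $i<t$ and a fixed vertex $v$ not in $e_t$, deleting the $k$ vertices of $e_t$ can destroy at most those edges of $F_i$ through $v$ that also meet $e_t$; the number of such edges is at most $k\binom{n-2}{k-2}$ (choose which vertex of $e_t$ lies in the edge, then complete the edge). Hence $\delta_1(F_i')\ge \delta_1(F_i)-k\binom{n-2}{k-2}$. So it suffices to find $e_t\in E(F_t)$ — any edge will do for this part — and then verify
\[
\delta_1(F_i)-k\binom{n-2}{k-2} \;>\; \binom{n-k-1}{k-1}-\binom{n-k-t+1}{k-1},
\]
which is the hypothesis needed to apply the lemma with $n-k$ vertices and $t-1$ graphs (note $n-k> 2k^4(t-1)$ follows from $n>2k^4t$). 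Using $\delta_1(F_i)>\binom{n-1}{k-1}-\binom{n-t}{k-1}$, this reduces to the purely numerical inequality
\[
\binom{n-1}{k-1}-\binom{n-t}{k-1} \;\ge\; \binom{n-k-1}{k-1}-\binom{n-k-t+1}{k-1} + k\binom{n-2}{k-2},
\]
which should hold comfortably in the regime $n> 2k^4 t$; the left side minus the two shifted binomials on the right is of order $t\binom{n}{k-2}$-ish after telescoping, while $k\binom{n-2}{k-2}$ is of order $k\binom{n}{k-2}$, and $t$ versus $k$ is controlled once $n$ is large relative to $k^4t$. This telescoping/estimate is the one routine computation; I would record it as a short claim and verify it by writing each binomial difference $\binom{a}{k-1}-\binom{a-s}{k-1}$ as a sum of $s$ terms of the form $\binom{\cdot}{k-2}$.

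The main obstacle is making the constant $2k^4t$ actually sufficient rather than just morally sufficient: one must be careful that the loss $k\binom{n-2}{k-2}$ incurred at each of the $t$ steps does not accumulate to exceed the slack in the degree condition. The cleanest way to handle this is \emph{not} to track a single running inequality across all $t$ steps, but to observe that the inductive hypothesis is self-contained — at each step we only need the one-step inequality above, with the ambient vertex count being whatever it currently is (at least $n-kt \ge n/2$, say, since $n>2k^4t$). Because $n-kt$ is still large, the bound $k\binom{n-kt-2}{k-2}\le k\binom{n-2}{k-2}$ is a uniform upper bound on the per-step loss, and the slack $\delta_1(F_i)-\big(\binom{n-1}{k-1}-\binom{n-t}{k-1}\big)\ge 1$ is, after the telescoping estimate, already of the right order $t\binom{n}{k-2}$ to dominate, so the induction goes through. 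I would also remark that this lemma, while weaker than Theorem~\ref{rainbow-pm}, is exactly the form needed later as an induction base because it applies to an arbitrary (bounded) number $t$ of $k$-graphs with a clean Erd\H{o}s-type degree threshold.
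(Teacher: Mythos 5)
Your induction scheme (delete a whole edge $e_t$ of $F_t$ and pass to $t-1$ graphs on $n-k$ vertices) has a genuine quantitative gap, and it is exactly at the step you flagged as "routine." The inequality you need,
\[
\binom{n-1}{k-1}-\binom{n-t}{k-1}\;\ge\;\binom{n-k-1}{k-1}-\binom{n-k-t+1}{k-1}+k\binom{n-2}{k-2},
\]
is false in the regime $n>2k^4t$. Writing each threshold as a telescoping sum of terms $\binom{\cdot}{k-2}$, the two thresholds are sums of $t-1$ and $t-2$ such terms which almost cancel pairwise; what remains is
\[
\Bigl[\tbinom{n-1}{k-1}-\tbinom{n-t}{k-1}\Bigr]-\Bigl[\tbinom{n-k-1}{k-1}-\tbinom{n-k-t+1}{k-1}\Bigr]
\;\le\;\binom{n-k-1}{k-2}+(k-1)(t-1)\binom{n-3}{k-3}
\;\le\;\Bigl(1+\tfrac{1}{2k^2}+o(1)\Bigr)\binom{n-2}{k-2},
\]
since $(k-1)(t-1)\binom{n-3}{k-3}\le \frac{k^2t}{n-2}\binom{n-2}{k-2}$ and $t<n/(2k^4)$. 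So the available slack per step is about \emph{one} $\binom{n-2}{k-2}$, not "$t\binom{n}{k-2}$-ish" as you claim, while the worst-case degree loss from deleting the $k$ vertices of $e_t$ is $k\binom{n-2}{k-2}$. Already for $k=2$ the gap is concrete: the threshold drops by $1$ but a vertex can lose $2$ from its degree. Choosing $e_t$ cleverly does not rescue the argument as written, because the minimum-degree hypothesis must survive at \emph{every} remaining vertex, and a single unlucky vertex can have essentially all of its (barely-above-threshold) degree concentrated on edges meeting $e_t$.

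The paper's proof avoids this by never deleting more than one vertex per reduction of $t$: note $\binom{n-1}{k-1}-\binom{n-2}{k-1}=\binom{n-2}{k-2}$, so deleting a single vertex costs at most exactly the decrement of the threshold from $(n,t)$ to $(n-1,t-1)$, and the bookkeeping closes with no room to spare. The missing idea is how to get by with deleting only one vertex: argue by contradiction. First apply induction (same $n$, graphs $F_1,\dots,F_{t-1}$) to get a rainbow matching $M$; if the full family has no rainbow matching, every edge of $F_t$ meets $V(M)$, so some $v\in V(M)$ has $d_{F_t}(v)>e(F_t)/(kt)>\frac{1}{2k^2}\binom{n-1}{k-1}$. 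Then apply induction again to $F_1-v,\dots,F_{t-1}-v$ (this is where the one-vertex deletion suffices) to get a rainbow matching $M'$ avoiding $v$, and observe that at most $k(t-1)\binom{n-2}{k-2}<\frac{1}{2k^2}\binom{n-1}{k-1}$ edges of $F_t$ through $v$ meet $V(M')$, so $v$ extends to an edge of $F_t$ disjoint from $M'$, a contradiction. You would need to incorporate an argument of this type (or some other mechanism that only charges $\binom{n-2}{k-2}$ per step) for your proposal to work.
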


\begin{proof}
	We apply induction on $t$. Note that the assertion is trivial when $t=1$. So assume $t>1$ and
the assertion  holds for $t-1$. Then, since
	\(
		\delta_1(F_i)>{n-1\choose k-1}-{n-t \choose k-1}>{n-1\choose k-1}-{n-(t-1) \choose k-1},
	\)
	 $\{F_1, \ldots, F_{t-1}\}$ admits a rainbow matching, say $M$.
	
	Suppose for a contradiction that $\{F_1, \ldots, F_t\}$ does not admit a rainbow matching. Then
    every edge of $F_t$ must intersect $M$. So there exists  $v\in V(M)$ such that $d_{F_t}(v)> e(F_t)/(kt)$. Note that
	\[
		\delta_1(F_t)>{n-1\choose k-1}-{n-t \choose k-1}>\binom{n-1}{k-1}\left(1-\left(1-\frac{k-1}{n-1}\right)^t\right)>\frac{t(k-1)}{2(n-1)}\binom{n-1}{k-1},
	\]
	since $n> 2k^4 t$.
	So we have
	\[
		d_{F_t}(v)>\frac{\delta_1(F_t)n/k}{kt}>\frac{t(k-1)n}{2(n-1)k^2t}\binom{n-1}{k-1}>\frac{1}{2k^2}\binom{n-1}{k-1}.
	\]
	
    Let $F'_i=F_i-v$ for $i\in [t-1]$. Since
	\[
		\delta_1(F'_i)\ge \delta_1(F_i)-\binom{n-2}{k-2}>{n-1\choose k-1}-{n-t \choose k-1}-\binom{n-2}{k-2}= \binom{n-2}{k-1}-\binom{n-t}{k-1},
	\]
	it follows from induction hypothesis that $\{F'_1, \ldots, F'_{t-1}\}$ admits a rainbow matching, say $M'$.
	
	Note that the number of edges in $F_t$ containing $v$ and intersecting $M'$ is at most
	\[
		k(t-1)\binom{n-2}{k-2}<\frac{1}{2k^2}\binom{n-1}{k-1}<d_{F_t}(v),
	\]
	as $n\ge 2k^4t$. Hence, $v$ is contained in some edge of $F_t-V(M')$, say $e$.
    Now $M'\cup \{e\}$ is a rainbow matching for $\{F_1, \ldots, F_t\}$, a contradiction.
\end{proof}

Next, we prove Theorem~\ref{prefect-(1,3)} for the case when, for every
vertex $v$, most of the edges of $H_{1,3}(n,n/3)$ containing $v$ also
lie in $H$. More precisely, given $\alpha>0$,  $H_{1,3}(n,n/3)$, and a
$(1,3)$-partite 4-graph $H$ with $V(H)=V(H_{1,3}(n,n/3)$, we say
that a vertex $v\in V (H)$ is {\it $\alpha$-good}
if $|N_{H_{1,3}(n,n/3)}(v)\setminus N_H(v)|\le \alpha n^3$. Otherwise
we say that $v$ is {\it $\alpha$-bad}.

\begin{lemma}\label{good}
Let $n$ be positive integer and $H$ be a balanced $(1,3)$-partite $4$-graph on $4n/3$ vertices,
and let $\alpha$ be a constant such that $0<\alpha <2^{-12}$.
If all vertices of $H$ are $\alpha$-good with respect to some $H_{1,3}(n,n/3)$ on $V(H)$,
then $H$ has a perfect matching.
\end{lemma}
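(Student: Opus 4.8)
The plan is to use $H_{1,3}(n,n/3)$ as a template. Write $V(H)=Q\cup A\cup B$ for the tripartition of $V(H_{1,3}(n,n/3))$ in which $Q$ is the part of size $n/3$ of the $(1,3)$-partition and $A\cup B=P$ is the vertex set of the host copy of $H(n,n/3)$, so $|Q|=|A|=n/3$ and $|B|=2n/3$. Since every edge of $H_{1,3}(n,n/3)$ meets $A$ in exactly one or two vertices and $|A|=n/3$, a perfect matching of $H_{1,3}(n,n/3)$ must use each of its $n/3$ edges in exactly one vertex of $A$ and two of $B$; equivalently, it consists of a bijection $f\colon Q\to A$ together with a partition of $B$ into $n/3$ pairs, one pair assigned to each $q\in Q$, such that every resulting $4$-tuple is an edge. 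Because every vertex is $\alpha$-good, at each vertex $H$ misses at most $\alpha n^3<n^3/4096$ of the $\Theta(n^3)$ edges of $H_{1,3}(n,n/3)$ through it, and summing over the $4n/3$ vertices shows $H$ misses only $O(\alpha n^4)$ edges of $H_{1,3}(n,n/3)$ in total; I would look for a perfect matching of $H$ of exactly the shape above.

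The first step is to choose the bijection $f$. Call a pair $\{q,a\}$ with $q\in Q$, $a\in A$ \emph{light} if at most $\theta:=c\alpha n^2$ of the edges of $H_{1,3}(n,n/3)$ containing both $q$ and $a$ are missing from $H$, where $c$ is a suitable absolute constant. For a fixed $q$, summing this count over $a\in A$ gives at most $2\,|N_{H_{1,3}(n,n/3)}(q)\setminus N_H(q)|\le 2\alpha n^3$, so $q$ lies in a light pair with all but fewer than $n/6$ of the vertices $a\in A$; the same holds with the roles of $Q$ and $A$ exchanged. Hence the bipartite graph of light pairs on $Q\cup A$ has minimum degree larger than $n/6=\frac12|Q|$, so it has a perfect matching $f$ by Hall's theorem. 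For this $f$, each link $L_i:=\{\,\{b,b'\}\subseteq B:\{q_i,f(q_i),b,b'\}\in E(H)\,\}$ misses at most $\theta$ of the $\binom{2n/3}{2}$ pairs inside $B$, and so is dense.

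The second step is to pair up $B$ so as to respect the links. Processing the indices $i$ one by one and taking, each time, a still unused pair $\{b,b'\}$ with $\{b,b'\}\in L_i$, the number of choices when $r$ vertices of $B$ remain free is at least $\binom r2-\theta$, so this greedy process continues as long as at least a small constant fraction of the vertices of $B$ is still free. Thus almost all of the matching can be built in this way, leaving a ``leftover'' $R$ with $|R\cap Q|=|R\cap A|$ and $|R\cap B|=2|R\cap Q|$ equal to a small (constant) fraction of $n$, and the remaining task is to find, inside $R$, a partition of $B\cap R$ into pairs, one lying in $L_i\cap\binom{B\cap R}{2}$ for each leftover index $i$.

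This endgame is the main obstacle. The hypothesis of the lemma controls only the $1$-degrees of $H$, so although each $L_i$ is globally dense it may have a few low-degree vertices, and a careless continuation of the greedy pairing could strand a ``bad'' index or a ``bad'' pair $\{b,b'\}$ for which no valid completion survives. To get around this I would use a counting argument: only $O(\sqrt{\alpha}\,n^2)$ pairs of vertices of $H$ lie in more than $\sqrt{\alpha}\,n^2$ missing edges, and each vertex forms such a bad pair with only a tiny fraction of the other vertices; so one can reserve a little room in $R$, deal with all the bad indices and bad pairs first (while each still has many valid completions), and then finish the remaining, robustly dense part of $R$ by one more short matching argument (again via Hall's theorem). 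Making all of these ``few bad objects'' estimates fit together under the bound $\alpha<2^{-12}$ is where the real work is; beyond that the argument is just averaging and Hall's theorem.
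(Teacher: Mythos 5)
There is a genuine gap, and it is exactly at the place you flag as ``where the real work is'' --- but the problem is more serious than a bookkeeping issue in the endgame: the two--stage plan (fix the bijection $f$ first, then pair up $B$) cannot be completed from the information you extract. Lightness of a pair $\{q,a\}$ only gives an $O(\alpha n^2)$ bound on the missing template edges through that pair, while $\alpha$-goodness allows a \emph{cubic} budget $\alpha n^3$ of missing edges at each vertex. Making a fixed vertex $b^*\in B$ isolated in the link $L_i$ of one chosen pair costs only $2n/3-1$ missing edges at $b^*$, so isolating $b^*$ in \emph{all} $n/3$ links of a prescribed bijection costs only about $\tfrac{2}{9}n^2\ll\alpha n^3$ missing edges at $b^*$ (and $O(n)$ at every other vertex). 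Concretely: fix any bijection $f_0:Q\to A$ and delete from the template exactly the edges $\{q,f_0(q),b^*,b'\}$ for all $q\in Q$, $b'\in B$. Every vertex remains $\alpha$-good with enormous room to spare, and every pair $\{q,a\}$ remains light (each pair of $f_0$ misses only $2n/3-1<\theta$ edges, all other pairs miss none), so the light-pair graph is complete bipartite and nothing in your argument prevents Hall's theorem from handing you $f=f_0$. For that choice, $b^*$ has no neighbour in any $L_i$, so no partition of $B$ into pairs respecting the links exists, greedy phase or not; and your endgame (reordering to treat bad indices and bad pairs first) runs entirely after $f$ is fixed, so it cannot repair this. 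In short, the choice of $f$ must interact with the vertex-level structure of the missing edges, not just with per-pair totals, and your proposal supplies no mechanism for that.

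The paper avoids this trap by never committing to a $Q$--$A$ assignment in advance: it takes a \emph{maximum} matching $M$ using edges of the shape (one vertex of $Q$, one of $W=A$, two of $U=B$), shows $|M|\ge n/4$, and then, assuming $M$ is not perfect, runs a switching argument: for each of the $\binom{|M|}{3}$ triples of edges of $M$, maximality forces a \emph{distinct} missing template edge inside the union of the triple and the four leftover vertices, producing more than $n^3/2^{10}$ missing edges that each meet the four leftover vertices, whence one leftover vertex sees more than $n^3/2^{12}>\alpha n^3$ missing edges --- contradicting goodness. The essential point is that this counting produces $\Theta(n^3)$ missing edges concentrated on four vertices, which is the only scale that can beat the $\alpha n^3$ goodness budget; your per-pair and per-link counts live at scale $O(n^2)$ and therefore can never contradict the hypothesis. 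If you want to salvage your template/bijection framework, you would need either to choose $f$ by a global argument that already certifies a rainbow perfect matching of the links (which is essentially the original difficulty in disguise), or to allow re-matching/switching as in the paper.
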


\pf Let $Q,P$ be the partition classes of $H$ such that $V(H(n,n/3))=P$ and let
$U,W$ denote the partition classes of $H(n,n/3)$ with $|W|=n/3$ and all
vertices of $H$ are $\alpha$-good with respect to $H_{1,3}(n,n/3)$.
Thus $|Q|=|W|=n/3$ and $|U|=2n/3$.

Let $M$ be a matching in $H$ that only uses edges consisting of two
vertcies from $U$ and one vertex from each of $Q$ and $W$, and choose
such $M$ that $|M|$ is maximum.
Let $Q':=Q\setminus V(M)$, $U'=U\setminus V(M)$, and
$W'=W\setminus V(M)$.  Then $|U'|/2=|W'|=|Q'|$.

Note that $|M|\geq n/4$. For, otherwise,
$|U'|/2=|W'|=|Q'|=n/3-|M|>n/12$. Then, by the maximality of $M$, we
have, for any $u\in U'$,
\[
|N_{H_{1,3}(n,n/3)}(u)\setminus N_H(u)|\geq |Q'||W'|(|U'|-1)> n^3/12^3>\alpha n^3,
\]
a contradiction.

Now suppose $M$ is not a perfect matching in $H$. Then $Q',U',W'$ are
all non-empty. Let
$v\in Q'$, $u_1,u_2\in U'$ be distinct,  and $w\in W'$.

Let $\{e_1, e_2,e_3\}$ be an arbitrary set of three pairwise distinct edges from $M$. By the maximality of
$M$,  no matching of size 4 in $H$ is contained in
$e_1\cup e_2\cup e_3\cup \{v,w,u_1,u_2\}$  and uses only edges with
two vertices from $U$ and one vertex from each of $Q$ and $W$.
Hence, there exists $S\in E(H_{1,3}(n,n/3))\setminus E(H)$ such that
$S\subseteq e_1\cup e_2\cup e_3\cup \{v,w,u_1,u_2\} $, $|S\cap e_i|=1$ for $i\in [3]$, $|S\cap
\{v,w,u_1,u_2\}|=1$,  and $S$ has two vertices from $U$ and one vertex from each of $Q$ and $W$.

Note that there are ${m\choose 3}$ choices for $\{e_1,e_2,e_3\}$, which
result in distinct choices for $S$. So the number of edges in
$E(H_{1,3}(n, n/3))\setminus E(H)$ containing exactly one vertex from
$\{v,w,u_1,u_2\}$ is at least
 $$ {m\choose 3}\ge {n/4\choose 3}>n^3/(2^{10}).$$
This implies that for some $u\in \{v,w,x_1,x_2\}$,
$$|N_{H_{1,3}(n,n/3)}(u)\setminus N_H(u)|>n^3/(2^{12})>\alpha n^3,$$
a contradiction. \qed

\medskip

  Having proved the above two results, we are ready to complete the proof of Theorem~\ref{prefect-(1,3)} in the case when $H$ is close to
  some $H_{1,3}(n,n/3)$. For any $v\in V(H)$, let $N_H(v)$ denote the
  {\it link graph} of $v$ whose vertex set is $V(H)$ and edges set is $\{e\setminus \{v\}: e\in E(H)\}$.

\begin{lemma}\label{close}
  Let $n$ be a positive integer and $\varepsilon >0$ sufficiently small, and let $H$ be a balanced $(1,3)$-partite 4-graph with
   partition classes $Q,P$ and $3|Q|=|P|=n$.
 Suppose $H$ is  $\varepsilon$-close to some $H_{1,3}(n,n/3)$ with
 $P=V(H(n,n/3))$.
If $d_H(\{u,v\})\ge {n-1\choose 2}-{2n/3\choose 2}+1$ for all $u\in P$
and $v\in Q$, then $H$ has a perfect matching.
\end{lemma}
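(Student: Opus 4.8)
The plan is to reduce the general $\varepsilon$-close case to the situation handled by Lemma~\ref{good}, where every vertex is $\alpha$-good with respect to a fixed $H_{1,3}(n,n/3)$. Fix the $H_{1,3}(n,n/3)$ to which $H$ is $\varepsilon$-close, and let $U,W$ be its partition classes with $|W|=n/3$, $|U|=2n/3$, so that $P=U\cup W$ and $Q$ has size $n/3$. Set $\alpha=\varepsilon^{1/2}$ (or some similar small power), and let $B\subseteq V(H)$ be the set of $\alpha$-bad vertices. Since $\sum_v|N_{H_{1,3}(n,n/3)}(v)\setminus N_H(v)|=4|E(H_{1,3}(n,n/3))\setminus E(H)|<4\varepsilon n^4$, a counting argument gives $|B|\le 4\varepsilon n^4/(\alpha n^3)=4\varepsilon^{1/2}n$, so $B$ is small (say $|B|\le \sqrt{\varepsilon}\,n$ after adjusting constants). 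The idea is to greedily pull out a small matching $M_0$ in $H$ covering all of $B$, in such a way that what remains is still balanced and still consists almost entirely of good vertices with respect to the induced copy of $H_{1,3}$, and then apply Lemma~\ref{good} to $H-V(M_0)$.

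First I would cover the bad vertices. Each bad vertex $b$ lies in an edge of $H$: if $b\in Q$ its degree is large (it has a partner $u\in P$ with $d_H(\{b,u\})\ge \binom{n-1}{2}-\binom{2n/3}{2}+1>0$, in fact every pair $\{b,u\}$ with $u\in P$ has many common neighbours), and if $b\in P$ the same pair-degree condition forces $b$ into many edges. Processing the at most $\sqrt{\varepsilon}n$ bad vertices one at a time, at each step I have removed only $O(\sqrt{\varepsilon}n)$ vertices, so the pair-degree condition still leaves $\gg n^2$ choices for an edge through the current bad vertex avoiding all previously used vertices; I also insist this edge meets $Q$ in one vertex, $W$ in (say) whatever is needed to keep the counts right, and $U$ in the rest. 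The more delicate point: after removing $V(M_0)$, I need the leftover parts $Q\setminus V(M_0)$, $U\setminus V(M_0)$, $W\setminus V(M_0)$ to still satisfy $|U\setminus V(M_0)|=2|W\setminus V(M_0)|=2|Q\setminus V(M_0)|$, i.e.\ the induced subgraph is a balanced $(1,3)$-partite $4$-graph with the leftover copy of $H_{1,3}$ having the right proportions. To arrange this I would, after covering $B$, add a few more edges of $H$ of the "typical" shape (two vertices in $U$, one in $W$, one in $Q$) to correct any imbalance among the three residual sizes; since $|M_0|=O(\sqrt{\varepsilon}n)$ is tiny, there is ample room among good vertices to do this, and the pair-degree hypothesis guarantees such typical edges exist through almost any chosen vertices.

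The remaining step is routine: the subgraph $H':=H-V(M_0)$ is a balanced $(1,3)$-partite $4$-graph on $4n'/3$ vertices with $n'=n-O(\sqrt{\varepsilon}n)$, every vertex of $H'$ is good with respect to the induced $H_{1,3}(n',n'/3)$ (removing $O(\sqrt{\varepsilon}n)$ vertices changes each deficiency $|N_{H_{1,3}}(v)\setminus N_H(v)|$ by only $O(\sqrt{\varepsilon}n^3)$, which after re-choosing the "bad" threshold keeps everything below $\alpha (n')^3$ with $\alpha<2^{-12}$), so Lemma~\ref{good} yields a perfect matching $M_1$ of $H'$, and $M_0\cup M_1$ is a perfect matching of $H$. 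I expect the main obstacle to be the bookkeeping in the second paragraph: one must cover $B$ \emph{and} simultaneously fix the three-way balance of the residual partition classes while staying inside the good vertices, and one must verify that each greedy step really does have an available edge — this is where the pair-degree bound $d_H(\{u,v\})\ge\binom{n-1}{2}-\binom{2n/3}{2}+1$ is used rather than closeness, so the two hypotheses have to be combined carefully.
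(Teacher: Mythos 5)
Your overall skeleton (isolate the $\sqrt{\varepsilon}$-bad set $B$, cover it by a small matching, then apply Lemma~\ref{good} to the all-good remainder) is the same as the paper's, but the step you yourself flag as delicate --- covering $B$ while keeping the residual three-way split $|U'|:|W'|:|Q'|=2:1:1$ --- is exactly where your argument breaks, in two ways. First, the proposed repair of the imbalance by adding ``edges of the typical shape (two vertices in $U$, one in $W$, one in $Q$)'' cannot work: deleting such an edge decrements $(|U'|,|W'|,|Q'|)$ by $(2,1,1)$, so the deviations $|U'|-2|Q'|$ and $|W'|-|Q'|$ are invariant under adding typical edges; only \emph{atypical} edges (e.g.\ with zero or two $W$-vertices among the three $P$-vertices) can change the balance, and their existence is not free --- indeed $H$ may contain no edge at all with three $U$-vertices or three $W$-vertices (e.g.\ when $H=H_{1,3}(n,n/3)$, which satisfies the hypothesis). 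Second, the claim that in the greedy step you can ``insist'' the covering edge through a bad vertex has a prescribed $(Q,W,U)$-type is not justified by the codegree hypothesis: for $b\in U$ and $v\in Q$ the number of completions of $\{b,v\}$ that are \emph{not} of mixed type is $\binom{2n/3-1}{2}+\binom{n/3}{2}$, which equals $\binom{n-1}{2}-\binom{2n/3}{2}$ exactly; so the hypothesis guarantees only a single mixed completion per pair, which disappears as soon as any previously used vertex must be avoided. Bad vertices are precisely those for which you cannot invoke closeness to supply typical edges, so neither the degree condition nor $\varepsilon$-closeness gives you the type control your bookkeeping needs.

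The paper's proof is built around this difficulty. It reserves a set $W'\subseteq W\setminus B$ of good $W$-vertices of carefully chosen size, covers the bad $Q$-vertices by a rainbow matching in the link graphs restricted away from $W'$ --- here the minimum degrees drop to order $|B|\,n$, which is why the tight rainbow-matching Lemma~\ref{HZ-16} is invoked rather than a naive greedy --- and then splits the remaining bad vertices into $B_1$ (many edges meeting $W'$ exactly once, covered by such edges) and $B_2$ (covered by edges disjoint from $W'$, whose existence follows from the vertex-degree bound minus $|Q|\binom{|W'|}{2}$), finally compensating the $W'$-deficient $B_2$-edges by an equal number of edges with exactly two $W'$-vertices ($M_2'$), whose existence through good $U$-vertices does follow from closeness. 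If you want to avoid this machinery, a workable alternative is to cover $B$ greedily by \emph{arbitrary} edges (only the pair-degree bound is needed for that) and then repair the $O(\sqrt{\varepsilon}n)$ imbalance not in $H$ but in the \emph{reference} graph, by re-choosing the $W$-class of the residual $H_{1,3}$ structure: moving $O(\sqrt{\varepsilon}n)$ vertices between the reference classes changes each vertex's reference link by only $O(\sqrt{\varepsilon}n^3)$, so all residual vertices remain $\alpha$-good with $\alpha<2^{-12}$ and Lemma~\ref{good} still applies. As written, however, your proposal has a genuine gap at this step.
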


\pf Let $U,W$ denote the partition of $P=V(H(n,n/3))$ such that $|W|=|U|/2= n/3$.
Note that $|Q|=n/3$.
Let $B$ denote the set of $\sqrt{\varepsilon}$-bad vertices of $H$.
Since $H$ is $\varepsilon$-close to $H_{1,3}(n,n/3)$, we have $|B|\leq
4\sqrt{\varepsilon}n$. Let $Q\cap B=\{v_1,
\ldots, v_q\}$ and $Q=\{v_1, \ldots, v_{n/3}\}$, and
let $W'\subseteq W\setminus B$ such that $|W'|=n/3-(q+|W\cap B|)$.

First, we find a matching $M_0'$ in $H-W'$ covering $Q\cap B$. For this,  let
$F_i=N_{H}(v_i)- W'$ for $i\in [n/3]$. Note that, for $i\in [n/3]$,
$\delta_1(N_H(v_i))=\min \{d_H(\{u,v_i\}): u\in P\}\ge {n-1\choose
  2}-{2n/3\choose 2}+1$. Hence,
\begin{align*}
\delta_1(F_i)&\geq \delta_1(N_{H}(v_i))-\left({n-1\choose 2}-{n-|W'|-1\choose 2}\right)\\
&>{n-|W'|-1\choose 2} -{2n/3\choose 2}\\
& = {n-|W'|-1\choose 2}-{n-|W'|-(q+|W\cap B|) \choose 2}.
\end{align*}
Since $|B|\le 4\sqrt{\varepsilon}n$, $|W'|\ge
n/3-4\sqrt{\varepsilon}n$ and $q+|W\cap B|<(n-|W'|)/(2\cdot 3^4)$.  Hence
by Lemma~\ref{HZ-16}, $\{F_{1},\ldots,F_{q+|W\cap B|}\}$ admits a
rainbow matching, say $M_0$.
Let $M_{0}=\{e_i\in E(F_i) : i\in [q+|W\cap B|]\}$, and let
$M_0'=\{e_i\cup \{v_i\} :  i\in [q+|W\cap B|]\}$.
Then $M_0'$ is a matching in $H$ and $Q\cap B\subseteq V(M_0')$.

Next, we find a matching in $H_1:=H-V(M_0')$ covering $B\setminus
V(M_0')$, in two steps. Since $\varepsilon$ is very small, we can choose $\eta$ such that $\sqrt{\varepsilon} \ll
\eta\ll 1$. We divide $B\setminus V(M_0')$ to two disjoint sets $B_1,B_2$
such that, for each $x\in B\setminus V(M_0')$, $x\in B_1$ if, and only if,  $H_1$ has at
least $\eta n^3$ edges each of which contains $x$ and exactly one vertex in $W'$.

We greedily pick a matching $M_1$ in $H_1$ such that $B_{1}\subseteq
V(M_1)$ and every edge of $M_1$ contains at least one vertex from
$B_1$ and exactly one vertex from $W'$. This can be done since each time we pick an edge $e$ for a vertex $x\in B_1$, we have
at least $\eta n^3$ choices and at most $4(4\sqrt{\varepsilon}n)n^2$ ($\ll \eta n^3$ as $\sqrt{\varepsilon}\ll \eta$) of which intersect a previous chosen edge.

Now we find a matching $M_2$ in $H_2:=H_1-V(M_1)$ such that $B_2\subseteq V(M_2)$. Note that
\[
\delta_1(H_2)\geq \delta_1(H)-4|M_0'\cup M_1|n^2\ge \frac{n}{3}\left({n-1\choose 2}-{2n/3\choose 2}+1\right)-16\sqrt{\varepsilon}n^3.
 \]
Hence, for any $x\in B_2$, the number of edges containing $x$ and disjoint from $W'$ is at least
\[
\delta_1(H_2)-\eta n^3-|Q|{|W'|\choose 2}>\eta n^3,
\]
as $\sqrt{\varepsilon}\ll \eta\ll 1$ and $|Q|=|W'|=n/3$. Thus, since $\sqrt{\varepsilon}\ll \eta$,
we greedily find a matching $M_2$ in $H_1-V(M_1)$ such that $B_2\subseteq V(M_2)$, $M_2$ is disjoint from $W'$, and
every edge of $M_2$ contains at least one vertex from $B_2$.

Thus, $M_1\cup M_2$ gives the desired matching in $H_1:=H-V(M_0')$ covering $B\setminus V(M_0')$.
Note that $|M_0'\cup M_1\cup M_2|\le (q+|W\cap B|) +|B_1|+|B_2|  \leq
2|B|\le 8\sqrt{\varepsilon}n$. Also note that each vertex of $H-V(M_0'\cup M_1\cup
M_2)$ is $\sqrt{\varepsilon}$-good in $H$ (with respect to
$H_{1,3}(n,n/3)$). Thus, for every vertex $u\in U-V(M_0'\cup M_1\cup M_2)$, the number
of edges of $H-V(M_0'\cup M_1\cup M_2)$ containing $u$ and  exactly two vertices of $W-V(M_0'\cup M_1\cup M_2)$ is at least
\[
\frac{n}{3}{n/3\choose 2}-\sqrt{\varepsilon} n ^3 -4|M_0'\cup M_1\cup M_2|n^2>\eta n^3,
\]
as $\sqrt{\varepsilon}\eta\ll 1$. Hence,  we may greedily find a matching $M_2'$ in $H-V(M_0'\cup M_1\cup M_2)$
such that $|M_2'|=|M_2|$ and every edge  of $M_2'$ contains exactly two vertices of $W'$.

Let $M=M_0'\cup M_1\cup M_2\cup M_2'$ and $m=|M|$. Then $m\le 8\sqrt{\varepsilon}n$.  Let
$H_3=H-V(M)$. Let $H_{1,3}(n-3m,n/3-m)$ be obtained from $H_{1,3}(n,n/3)$ by removing $V(M)$. Then, for any $x\in V(H_3)$,
\begin{eqnarray*}
& &  |N_{H_{1,3}(n-3m,n/3-m)}(x)\setminus N_{H_3}(x)| \\
& \le & |N_{H_{1,3}(n,n/3)}(x)\setminus N_{H}(x)|+ |N_{H}(x)\setminus N_{H_3}(x)| \\
& \le & \sqrt{\varepsilon} n^3+4mn^2\\
&\le & \varepsilon^{1/5}n^3.
\end{eqnarray*}

Thus, every vertex of $H_3$ is $\varepsilon^{1/5}$-good with respect to $H_{1,3}(n-3m,n/3-m)$.
By Lemma \ref{good},  $H_3$ contains a perfect matching, say $M_3$.  Now $M_3\cup M$ is a perfect  matching in $H$.\qed

\section{Absorbing Lemma}

Our strategy to prove Theorem~\ref{prefect-(1,3)} is to find a small matching $M'$ in $H$ that can be used to ``absorb" small sets of
vertices,  find an almost perfect matching $M''$ in $H-V(M')$, and  then use $M'$ to absorb $V(H)\setminus V(M'\cup M'')$.
In this section, we prove such an  absorbing lemma for $(1,3)$-partite $4$-graphs. Our proof follows along the same lines
as in \cite{RRS06}.

\begin{lemma}\label{absorbing}
Let $n\in 3\Z$ be large enough and let $H$ be a $(1,3)$-partite $4$-graph with partition classes $Q,P$ such that $3|Q|=|P|$ and
$\delta_1(H)\ge (n/3)\left({n-1\choose 2}-{2n/3\choose 2}+1\right)$.
Let $\rho, \rho'$ be constants such that $0<\rho'\ll \rho\ll 1$.
Then $H$ has  a matching $M'$ such that  $|M'|\leq \rho n$ and,
for any subset $S\subseteq V(H)$
with $|S|\leq \rho' n$ and $3|S\cap Q|=|S\cap R|$,
$H[S\cup V(M)]$ has a perfect matching.
\end{lemma}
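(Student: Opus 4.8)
\medskip
\noindent\textit{Proof strategy.}
The plan is to use the absorbing technique of R\"odl, Ruci\'nski and Szemer\'edi. Write $|P|=n$ and $|Q|=n/3$, and call a $4$-set $T$ with $|T\cap Q|=1$ and $|T\cap P|=3$ a \emph{quadruple}. For a quadruple $T$, say that a $12$-set $A\subseteq V(H)\setminus T$ with $|A\cap Q|=3$ is a \emph{$T$-absorber} if both $H[A]$ and $H[A\cup T]$ have perfect matchings. The heart of the matter is the following counting statement:

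\smallskip
\noindent\textbf{Claim.} There is a constant $c_0>0$ such that every quadruple $T$ has at least $c_0 n^{12}$ $T$-absorbers.

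\smallskip
\noindent Granting the Claim, I would finish by the standard random-selection argument. Form a random family $\mathcal{A}$ by including each $12$-set that is a $T$-absorber for some quadruple $T$ independently with probability $p=\beta n^{-11}$ for a small constant $\beta>0$. Then $\E|\mathcal{A}|=\Theta(\beta n)$, the expected number of intersecting pairs inside $\mathcal{A}$ is $O(\beta^{2}n)$, and for every fixed quadruple $T$ the number of $T$-absorbers in $\mathcal{A}$ is a sum of independent indicators of mean $\ge c_0\beta n$, hence is $\ge\tfrac12 c_0\beta n$ with probability $1-o(1)$; since there are only $O(n^{4})$ quadruples, a union bound yields an outcome of $\mathcal{A}$ with all three properties. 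Deleting one member from each intersecting pair and taking $\beta$ small, I obtain a family $\mathcal{A}'$ of pairwise disjoint absorbers with $|\mathcal{A}'|\le\rho n/3$ in which every quadruple still has $\ge\tfrac14 c_0\beta n$ $T$-absorbers; letting $M'$ be the union, over $A\in\mathcal{A}'$, of a perfect matching of $H[A]$ gives a matching of $H$ with $|M'|=3|\mathcal{A}'|\le\rho n$. For the absorbing property, let $S\subseteq V(H)\setminus V(M')$ with $|S|\le\rho'n$ and $3|S\cap Q|=|S\cap P|$, and partition $S$ arbitrarily into quadruples $T_1,\dots,T_s$ (pair each vertex of $S\cap Q$ with three vertices of $S\cap P$), so $s\le\rho'n/4$. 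Members of $\mathcal{A}'$ are pairwise disjoint and, as $S\cap V(M')=\emptyset$, disjoint from $S$; so, choosing $\rho'$ small relative to $\rho$ and to the absolute constant $c_0$ so that $\tfrac14 c_0\beta n>s$, we may pick for $i=1,\dots,s$ a $T_i$-absorber $A_i\in\mathcal{A}'$ distinct from $A_1,\dots,A_{i-1}$. Replacing in $M'$, for each $i$, the perfect matching of $H[A_i]$ by a perfect matching of $H[A_i\cup T_i]$ then produces a perfect matching of $H[S\cup V(M')]$.

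\smallskip
\noindent It remains to prove the Claim, which is the step I expect to need the real work. Since $\delta_1(H)\ge\frac n3\bigl(\binom{n-1}{2}-\binom{2n/3}{2}+1\bigr)=\bigl(\tfrac5{54}-o(1)\bigr)n^3$ while every vertex of $H$ lies in at most $\binom n3$ edges, every vertex lies in at least a $\bigl(\tfrac59-o(1)\bigr)$-fraction of its potential edges. From this one extracts the structural facts I would use: (i) for all $q,q'\in Q$ the link $3$-graphs $N_H(q),N_H(q')\subseteq\binom P3$ satisfy $|N_H(q)\cap N_H(q')|=\Omega(n^3)$ (two sets of density exceeding $\tfrac12$); (ii) for every $w\in P$, since $\sum_{q\in Q}d_H(\{q,w\})=d_H(w)=\Omega(n^3)$ with each term at most $\binom{n-1}{2}$, there are $\Omega(n)$ vertices $q\in Q$ with $d_H(\{q,w\})=\Omega(n^2)$, and for each such $q$ there are $\Omega(n)$ vertices $z\in P$ with $d_H(\{q,w,z\})=\Omega(n)$; and (iii) for every $q\in Q$ there are $\Omega(n)$ vertices $z\in P$ with $d_H(\{q,z\})=\Omega(n^2)$. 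Given a quadruple $T=\{v;x_1,x_2,x_3\}$, I would assemble a $T$-absorber $A=\{a_1,a_2,a_3\}\cup\{z_1,\dots,z_9\}$ with $a_i\in Q$ and $z_j\in P$ for which
\[
M_A=\bigl\{\{a_1,z_1,z_2,z_3\},\ \{a_2,z_4,z_5,z_6\},\ \{a_3,z_7,z_8,z_9\}\bigr\}
\]
is a perfect matching of $H[A]$ and
\[
M_A'=\bigl\{\{v,z_1,z_4,z_7\},\ \{a_1,x_1,z_5,z_8\},\ \{a_2,x_2,z_2,z_9\},\ \{a_3,x_3,z_3,z_6\}\bigr\}
\]
is a perfect matching of $H[A\cup T]$. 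One checks directly that $M_A$ covers $A$, that $M_A'$ covers $A\cup T$, and, crucially, that every edge of $M_A$ meets every edge of $M_A'$ in at most one vertex; thus the configuration never forces a prescribed pair of vertices (in particular a prescribed pair consisting of a vertex of $Q$ and a vertex of $P$, whose co-degree we cannot bound from below) to lie in an edge. To count such configurations I would first choose $a_1,a_2,a_3\in Q\setminus\{v\}$ with $d_H(\{a_i,x_i\})=\Omega(n^2)$ for $i\in[3]$ (possible by (ii), with $\Omega(n^3)$ choices), and then choose the nine filler vertices $z_j$ so that all seven displayed $4$-sets become edges; the three edges $\{a_i,z_{3i-2},z_{3i-1},z_{3i}\}$ and the edge $\{v,z_1,z_4,z_7\}$ are flexible because the links $N_H(a_i)$ and $N_H(v)$ have density exceeding $\tfrac12$, while the remaining edges are controlled using the $\Omega(n)$-sized ``good'' sets supplied by (ii) and (iii). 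The main obstacle is exactly this last count: each $z_j$ lies in two of the seven constraints, so it must be picked to lie \emph{simultaneously} inside several dense links and good sets at once; carrying this through requires ordering the choices so that each new vertex completes precisely one constraint while still having $\Omega(n)$ admissible values at that moment, and controlling the correlations among the constraints, and it is here that the quantitative surplus in $\tfrac59>\tfrac12$, together with the fact that all the relevant auxiliary vertex sets have size $\Omega(n)$, gets consumed.
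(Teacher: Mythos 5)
Your overall skeleton (define $12$-set absorbers for balanced $4$-sets, show each $4$-set has $\Omega(n^{12})$ absorbers, select a random sparse family, discard intersecting pairs, absorb greedily) is exactly the paper's, and your random-selection/absorption paragraph is fine. The gap is in the one step you yourself flag as "the real work": the counting Claim, and as designed it does not go through. In your template, \emph{every} vertex of $A=\{a_1,a_2,a_3,z_1,\dots,z_9\}$ lies in exactly two of the seven prescribed $4$-sets (each $a_i$ lies in one edge of $M_A$ and one of $M_A'$, each $z_j$ likewise). Consequently, in any order in which you pick the twelve vertices of $A$, the last vertex picked is the final missing vertex of \emph{both} constraints containing it; the ordering you ask for, in which "each new vertex completes precisely one constraint," simply does not exist for this configuration. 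At that last step the vertex must lie in the intersection of two prescribed neighborhoods, e.g.\ $N_H(\{a_2,z_4,z_6\})\cap N_H(\{a_1,x_1,z_8\})$, i.e.\ an intersection of two triple-links (or of the merely-$\Omega(n)$-sized "good sets" from your facts (ii)--(iii)). The hypothesis is only a minimum vertex degree, $\delta_1(H)\ge\frac n3\bigl(\binom{n-1}{2}-\binom{2n/3}{2}+1\bigr)$, which gives vertex links of density $>5/9$ but says nothing about a fixed triple's degree or about intersections of two sets that are each only a positive proportion of $P$ (or of $Q$); such intersections can be empty. Averaging shows the \emph{typical} triple degree is about $\tfrac59 n$, but to save your scheme you would have to arrange, along the way, that both of the two specific triples meeting the last vertex are atypically good \emph{and} that their links intersect, which your sketch neither arranges nor proves possible; with density only $5/9$ there is very little slack.

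The paper's proof avoids this obstruction by a different absorber design: it first picks $U_0=\{w_1,w_2,w_3\}\in N_H(u_0)$, and then, for $i\in[3]$, picks the \emph{whole} $3$-set $U_i$ from $N_H(u_i)\cap N_H(w_i)$, where $u_i,w_i\in P$. The doubly-constrained objects are thus entire $3$-sets constrained by two \emph{vertex} links, each of density $>5/9>1/2$ in the common ground set of $(1Q,2P)$-triples, so inclusion--exclusion gives $|N_H(u_i)\cap N_H(w_i)|=\Omega(n^3)$ and the greedy count $\Omega(n^{12})$ follows immediately; no triple degrees or codegrees are ever needed. If you want to keep your own template, you must either redesign it so that the pairwise-intersection requirements only ever involve dense vertex links (as the paper does), or supply a genuinely new argument controlling the triple-link intersections; as written, the Claim is unproven and the stated plan for proving it cannot be executed.
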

\pf
We call a balanced 12-element set $A\subseteq V(H)$ an {\it absorbing set} for a balanced $4$-element set $T\subseteq V(H)$
if $H[A]$ has a matching of size 3 and $H[A\cup T]$ has a matching of size $4$.
Denote by $\mathcal{L}(T)$ the collection of all absorbing sets for $T$. Then

\begin{itemize}
\item [(1)]  for every balanced  $T\in  {V(H)\choose 4}$, $|\mathcal{L}(T)|> 10^{-8}n^{12}/12!$.
\end{itemize}
Let  $T=\{u_0,u_1,u_2,u_3\}\in  {V(H)\choose 4}$ be balanced, with
$u_0\in Q$ and $u_1,u_2,u_3\in P$. We form an absorbing set for $T$ by choosing four pairwise disjoint 3-sets $U_0,U_1,U_2,U_3$ in order.

First, we choose a 3-set $U_0\subseteq P\setminus T$ such that $U_0\cup \{u_0\}\in E(H)$. The number of choices
for $U_0$ is  at least
$$d_H(u_0)-3{n-3\choose 2}> \delta_1(H)-3{n-1\choose 2}>\frac{n}{9}{n-1\choose 2}.$$

Now fix a choice of $U_0$, and let $U_0=\{w_1,w_2,w_3\}$. Note that, for each $i\in [3]$,
$N_H(u_i)\cup N_H(w_i)$ is a subset of the union of
$\{\{x_0,x_1,x_2\}: x_0\in Q, x_1,x_2\in P\}$.
Hence, $|N_H(u_i)\cup N_H(w_i)|\le  \frac{n}{3}{n\choose 2}$. Thus,
for $i\in [3]$,
$$
|N_{H}(u_i)\cap N_{H}(w_i)|\geq \frac{2n}{3}\left({n-1\choose 2}-{2n/3\choose 2}+1\right)-\frac{n}{3}{n\choose 2}\\
\geq  \frac{n}{30}{n-1\choose 2}.$$

For $i\in [3]$, we choose 3-sets $U_i$ from $V(H)\setminus T\setminus \bigcup_{j=0}^{i-1}U_j$ such that $U_i\cup \{u_i\}$ and $U_i\cup \{w_i\}$
are both edges of $H$. For each choice of $U_j$ with $0\le j\le i-1$, the number of choices for $U_i$ is at least
$$ |N_{H}(u_i)\cap N_{H}(w_i)|-13(n/3)n \ge \frac{n}{30}{n-1\choose 2}-13n^2/3>  \frac{n}{50}{n-1\choose 2}.$$

Let $A=\bigcup_{i=0}^3 U_i$. Then
$\{U_i\cup \{w_i\}: i\in [3]\}$ is a matching in $H[A]$, and $\{U_i\cup \{u_i\}: i\in [3]\cup \{0\}\}$  is a matching in $H[A\cup T]$.
Thus $A$ is an absorbing set for $T$.
Since there are more than $10^{-8}n^{12}$ choices of $(U_0,U_1,U_2,U_3)$,
there are more than $10^{-8}n^{12}/12!$ absorbing sets for $T$.  $\Box$

\medskip

Now, form a family $\F$ of subsets of $V(H)$ by selecting each of the ${n/3\choose 3}{n\choose 9}$
possible balanced 12-sets independently with probability
\[
p=\frac{\rho n}{2{{n/3\choose 3}{n\choose 9}}}.
\]
Then, it follows from  Chernoff's bound that, with probability $1-o(1)$ (as $n\rightarrow \infty$),
\begin{itemize}
\item [(3)] $|\F|\leq \rho n$, and
\item [(4)] $|\mathcal{L}(T)\cap F|\geq p |\mathcal{L}(T)|/2\geq 10^{-10}\rho n$ for all balanced $T\in {V(H)\choose 12}$.
\end{itemize}

Furthermore, the expected number of intersecting pairs of sets in $\F$ is at most
\[
{n/3\choose 3}{n\choose 9} \left[3{n/3-1\choose 2}{n\choose 9}+9{n-1\choose 8}{n/3\choose 3} \right]p^2< \rho^{1.5}n.
\]
Thus, using Markov's inequality, we derive that, with probability at least 1/2,
\begin{itemize}
\item [(5)] $\F$ contains at most $2\rho^{1.5}n$ intersecting pairs.
\end{itemize}

Hence, with positive probability, $\F$ satisfies (3), (4), and (5).
Let $\F'$ be obtained from $\F$ by removing one set from each intersecting pair and deleting all non-absorbing sets.
Then  $F'$ consists of pairwise disjoint absorbing sets, such that for each $T\in {V(H)\choose 4}$,
\[
|\mathcal{L}(T)\cap \F'|\geq 10^{-10}\rho n/2.
\]

 Since $F'$ consists only of pairwise disjoint absorbing sets, $H[V(F')]$ has a perfect matching, say  $M'$. Then $|M'| \le \rho n$.
 To complete the proof, take an arbitrary $S\subseteq V(H)\setminus V(M)$ with $|S|\leq \rho' n$ and $3|S\cap Q|=|S\cap P|$,
 where $\rho'\le 10^{-10}\rho/2$.
Note that $S$ can be partitioned into $t$ balanced $4$-sets, say $T_1,\ldots, T_t$, for some $t\le \rho' n/4<10^{-10}\rho n/2$.
We can greedily
choose distinct absorbing sets $A_i\in \F'$ in order for $i=1, \ldots, t$, such that $H[A_i\cup T_i]$ has a perfect matching.
Hence, $H[S\cup V(M')]$ has a perfect matching as required. \qed

\section{Perfect fractional matching}


When $H$ is not close to any $H_{1,3}(n,n/3)$  we will show that $H$ contains a $(1,3)$-partite 4-graph $H'$ in which no independent set
is too large (see Lemma~\ref{indep}) and we then use this property of
$H'$ to show that $H'$ has a perfect fractional  matching (see Lemma~\ref{pfrac}).

To obtain $H'$, we use the hypergraph container method developed by
Balogh, Morris and Samotij \cite{BMS15} and, independently, by Saxton and Thomason \cite{ST15}.
A family ${\cal F}$ of subsets    of a set $V$  is
said to be {\it increasing} if, for any $A\in \F$ and
$B\subseteq V$,   $A\subseteq B$ implies $B\in \F$.
Let $H$ be a hypergraph. We use $v(H), e(H)$ to denote the number of
vertices,  number of edges in $H$, respectively. We also use $\Delta_l(H)$ to denote the maximum $l$-degree of $H$, and $\I(H)$ to
denote the collection of all independent sets in $H$. Let $\varepsilon>0$
and  let ${\cal F}$ be a family of subsets of $V(H)$. We say that $H$ is \textit{$(\mathcal F , \varepsilon)$-dense} if $e(H[A])\ge \varepsilon e(H)$ for every $A\in \F$. We use $\overline{\F}$ to denote the family consisting of subsets of $V(H)$ not in $\F$.

\begin{lemma}[Balogh, Morris, and Samotij, 2015]\label{thm2.2}
        For every $k \in \N$ and all positive $c$ and $\varepsilon$, there exists a positive constant $C$ such that the following holds.
        Let $H$ be a $k$-graph and let $\F$ be an increasing family of subsets of $V(H)$ such that
        $|A| \ge \varepsilon v(H)$ for all $A \in \F$.
        Suppose that $H$ is $(\mathcal F , \varepsilon)$-dense and $p \in (0, 1)$ is such that, for every $l \in [k]$,
        \[\Delta_l(H)\le cp^{l-1}\frac{e(H)}{v(H)}.
        \]
        Then there exist a family $\S\subseteq \binom{V(H)}{\le Cp v(H)}$ and functions $f: \S \to \overline{\F}$ and $g: \I (H) \to \S$ such that, for every $I\in \I (H)$,
        \[g(I)\subseteq I \quad and \quad I\setminus g(I)\subseteq f(g(I)).\]
\end{lemma}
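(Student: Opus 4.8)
This is (a form of) the hypergraph container lemma; the plan is to prove it by the algorithmic (Kleitman--Winston-type) route, with induction on the uniformity $k$, the case $k=1$ being immediate. The heart of the matter is a deterministic \emph{container algorithm} which, fed an arbitrary $I\in\I(H)$, produces a short \emph{fingerprint} $g(I)\subseteq I$ and a leftover set $f(g(I))\supseteq I\setminus g(I)$ lying in $\overline{\F}$; one then observes that the algorithm's run depends on $I$ only through the values it records in $g(I)$, so $f$ descends to a well-defined map on $\S:=\{g(I):I\in\I(H)\}$.

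Here is the algorithm. Fix once and for all a linear order on $V(H)$. Given $I$, maintain an ``available'' set $A$ (initially $V(H)$) and a list $S=\emptyset$, and repeat the following: if $A\notin\F$, stop; otherwise let $v$ be the vertex of $A$ of largest degree $d_{H[A]}(v)$, ties broken by the order. If $v\notin I$, delete $v$ from $A$. If $v\in I$, append $v$ to $S$ and then prune $A$ further by performing a bounded-fingerprint container step one uniformity level down --- namely run the $(k-1)$-uniform algorithm on the link $(k-1)$-graph $N_{H[A]}(v)$ with the independent set $I\cap V(N_{H[A]}(v))$ (vacuous for $k=1$), deleting from $A$ the vertices that step flags and appending its sub-fingerprint to $S$. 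On termination set $g(I):=S$ and $f(g(I)):=A$.

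Correctness is the easy direction. At every level the accumulated fingerprint is a subset of $I$, and $I$ stays independent in the successive link hypergraphs, so no vertex of $I$ is ever deleted in a ``$v\notin I$'' step or flagged by any recursive call; hence vertices of $I$ leave $A$ only by being placed in $S$, which gives $I\setminus g(I)\subseteq A=f(g(I))$. Since $\F$ is increasing and $A$ only shrinks, ``$A\notin\F$'' is an absorbing state, so the stopping rule is sound and $f(g(I))\in\overline{\F}$. Finally, replaying the loop with only $S$ in hand one recovers, at each step, whether the current maximiser lies in $I$ (precisely when it is the next unread entry of $S$), so the whole sequence of available sets, and in particular the terminal one, is a function of $S$ alone: this is exactly the well-definedness of $f\colon\S\to\overline{\F}$.

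The real work --- and the step I expect to be the main obstacle --- is the size bound $|g(I)|\le Cpv(H)$, and this is where all three hypotheses enter. The condition $|A|\ge\varepsilon v(H)$ for $A\in\F$ prevents $v(H)$ from degenerating; $(\F,\varepsilon)$-density forces $e(H[A])\ge\varepsilon e(H)$ as long as the loop runs, so the selected $v$ always has degree of order $e(H)/v(H)$; and the bounds $\Delta_\ell(H)\le cp^{\ell-1}e(H)/v(H)$ govern the recursion. The argument is a Kleitman--Winston amortization run simultaneously at all $k$ levels: a maximum-degree selection ``spends'' a fixed proportion of the current edge budget, the link hypergraphs satisfy degree hypotheses of the same shape with the exponent $p^{\ell-1}$ shifted by one per descent, and balancing these estimates across levels collapses the naive bound to $Cpv(H)$ with $C=C(k,c,\varepsilon)$. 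Carrying out this bookkeeping rigorously --- reconciling the degree parameters across recursion levels and checking they survive the conditioning on the fingerprint vertices already chosen --- is essentially the entire content; the scaffolding above is routine.
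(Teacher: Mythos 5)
The paper does not prove this statement at all: it is quoted verbatim as the container theorem of Balogh, Morris and Samotij \cite{BMS15}, so the only way to supply a proof is to reprove that theorem. What you have written is a sketch in the spirit of their algorithmic argument, but the step you explicitly defer --- the bound $|g(I)|\le Cp\,v(H)$ --- is the entire content of the result. Everything you do carry out (vertices of $I$ are never deleted, $\overline{\F}$ is reached because $\F$ is increasing, $f$ is well defined because the run can be replayed from the transcript $g(I)$) is the easy direction; calling the remaining amortization ``routine bookkeeping'' does not make this a proof.

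Moreover, the recursion as you have set it up is structurally off, not merely unquantified. Descending separately into the link of each selected vertex cannot yield the stated bound: a single link $N_{H[A]}(v)$ has at most $\Delta_1(H)\le c\,e(H)/v(H)$ edges on up to $v(H)$ vertices, so its edge-to-vertex ratio is smaller than that of $H$ by a factor of order $v(H)$, while its $l$-degrees are only bounded by $\Delta_{l+1}(H)\le c\,p^{l}e(H)/v(H)$; these do not satisfy the inductive hypothesis for any parameter comparable to $p$, and in any case attaching a sub-fingerprint to each of the up to $\Theta(p\,v(H))$ chosen vertices would already overshoot $Cp\,v(H)$. This is precisely why Balogh--Morris--Samotij do not recurse link by link: one pass of their algorithm at uniformity $k$ aggregates the links of \emph{all} fingerprint vertices chosen in that pass into a single $(k-1)$-uniform hypergraph (with multiplicity caps so that its $\Delta_l$'s and edge count satisfy the same hypotheses with the same $p$), and induction is applied once to that aggregated hypergraph. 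Without that aggregation device and the accompanying counting, the size bound --- and hence the lemma --- does not follow from your outline.
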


In order to apply Lemma~\ref{thm2.2} we need a family $\F$ of subsets
of $V(H)$ so that $H$ is  $(\F, \varepsilon)$-dense,
which is possible when $H$ is not close to any $H_{1,3}(n,n/3)$.

\begin{lemma}\label{dense}
Let $\rho, \varepsilon$ be reals such that $0<\rho\le \varepsilon/4\ll
1$, let $n\in 3\Z$ be large, and
let $H$ be a $(1,3)$-partite $4$-graph with partition classes $Q,P$
such that $3|Q|=|P|=n$ and
$d_H(\{u,v\})\geq {n-1\choose 2}-{2n/3\choose 2}-\rho n^2$ for any
$v\in Q$ and $u\in P$. If $H$ is not $\varepsilon$-close to any $H_{1,3}(n,n/3)$, then $H$
is $(\F, \varepsilon/6)$-dense, where $\F=\{A\subseteq V(H) : |A\cap Q|\ge (1/3-\varepsilon/8) n \mbox{ and } |A\cap P|\ge (2/3-\varepsilon/8) n\}$.
\end{lemma}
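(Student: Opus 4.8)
\medskip
\noindent\textbf{Proof plan.} I would argue by contraposition: assume $H$ is not $(\F,\varepsilon/6)$-dense, so there is some $A\in\F$ with $e(H[A])<(\varepsilon/6)e(H)$, and the goal is to exhibit an $H_{1,3}(n,n/3)$ on $V(H)$ that is $\varepsilon$-close to $H$, contradicting the hypothesis. Write $Q_0=Q\setminus A$ and $W=P\setminus A$. Since $A\in\F$ gives $|A\cap Q|\ge(1/3-\varepsilon/8)n=|Q|-\varepsilon n/8$ and $|A\cap P|\ge(2/3-\varepsilon/8)n$, one immediately gets $|Q_0|\le\varepsilon n/8$ and $|W|\le(1/3+\varepsilon/8)n$.

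Next I would fix the target graph. Choose $W'\subseteq P$ with $|W'|=n/3$ so that $W\subseteq W'$ when $|W|\le n/3$ and $W'\subseteq W$ when $|W|>n/3$; in either case $|W\setminus W'|\le\varepsilon n/8$. Put $U'=P\setminus W'$ and let $H^{*}$ be the $(1,3)$-partite $4$-graph on $Q\cup P$ with edge set $\{\{v\}\cup e:\ v\in Q,\ e\in\binom{P}{3},\ 1\le|e\cap W'|\le 2\}$; this is an $H_{1,3}(n,n/3)$ on $V(H)$, with $e(H^{*})=|Q|N^{*}$ where $N^{*}:=\binom{n}{3}-\binom{2n/3}{3}-\binom{n/3}{3}$. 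It then suffices to prove $|E(H^{*})\setminus E(H)|<\varepsilon(4n/3)^{4}$.

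For $i\in\{0,1,2,3\}$ let $e_i$ be the number of edges $\{v\}\cup e$ of $H$ with $|e\cap W'|=i$, and set $D_0:=\binom{n-1}{2}-\binom{2n/3}{2}-\rho n^{2}$. The plan rests on three facts: (i) $|E(H^{*})\setminus E(H)|=|Q|N^{*}-e_1-e_2$, since the edges of $H$ lying in $H^{*}$ are exactly those with $1\le|e\cap W'|\le 2$; (ii) counting, over all edges of $H$, the number of $P$-vertices in $U'$, one has $3e_0+2e_1+e_2=\sum_{u\in U',\,v\in Q}d_H(\{u,v\})\ge|U'|\,|Q|\,D_0=\tfrac{2n^{2}}{9}D_0$; and (iii) $e_1\le|Q|\,|W'|\binom{|U'|}{2}=\tfrac{n^{2}}{9}\binom{2n/3}{2}$, since an edge counted by $e_1$ is determined by its $Q$-vertex, its unique $W'$-vertex, and a pair in $U'$. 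Combining $e_1+e_2=(3e_0+2e_1+e_2)-3e_0-e_1$ with (ii), (iii), and $\binom{n}{3}=\tfrac{n}{3}\binom{n-1}{2}$, fact (i) yields
\[
|E(H^{*})\setminus E(H)|\ \le\ \Big(|Q|N^{*}-\tfrac{2n^{2}}{9}\binom{n-1}{2}+\tfrac{n^{2}}{3}\binom{2n/3}{2}\Big)+\tfrac{2\rho n^{4}}{9}+3e_0 .
\]
A short computation shows the $n^{4}$-coefficient of the bracketed term is $\tfrac1{486}(-27-8-1+36)=0$, so that term is $O(n^{3})$. Finally, every edge counted by $e_0$ has its $P$-vertices in $U'\subseteq(A\cap P)\cup(W\setminus W')$, hence lies in $H[A]$, or meets $Q_0$, or meets $W\setminus W'$; so $e_0\le e(H[A])+|Q_0|\binom{n}{3}+|Q|\,|W\setminus W'|\binom{n}{2}<(\varepsilon/6)e(H)+O(\varepsilon n^{4})=O(\varepsilon n^{4})$, using $e(H)\le|Q|\binom{n}{3}$. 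With $\rho\le\varepsilon/4$ this gives $|E(H^{*})\setminus E(H)|=O(\varepsilon n^{4})+O(n^{3})$, with a small absolute implied constant, so it is below $\varepsilon(4n/3)^{4}$ once $n$ is large; thus $H$ is $\varepsilon$-close to $H^{*}$, a contradiction, proving the lemma.

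The only delicate point is fact (ii): the pair-degree bound must be summed over $U'$ \emph{alone}, weighting each edge by its number of $U'$-vertices, rather than over all of $P$. Cruder estimates---comparing $e(H)$ with $|Q|N^{*}$ directly, or the total number of non-edges of $H$ with the number inside $A$---leave a surplus of order $\tfrac{2}{243}n^{4}$, which is not of the form $O(\varepsilon n^{4})$ and so is fatal for small $\varepsilon$; it is precisely the partial sum over $U'$, together with the trivial upper bound (iii) on $e_1$, that forces the exact cancellation of the $n^{4}$-terms. Everything else is routine estimation with binomial coefficients, legitimate because $n\in3\Z$ and $n$ is large (so $n/3,2n/3\in\Z$ and all lower-order terms are absorbed).
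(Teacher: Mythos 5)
Your proof is correct, and it reaches the same extremal configuration as the paper (an $H_{1,3}(n,n/3)$ whose small side $W'$ is essentially $P\setminus A$, with the missing edges controlled by the pair-degree hypothesis, the sparseness of $H[A]$, and the two small exceptional sets $Q\setminus A$ and $W\setminus W'$), but the accounting is organized differently. The paper argues vertex by vertex: for each $u\in A\cap P$ it compares $d_{H_0}(u)$ with $d_H(u)=\sum_{v\in Q}d_H(\{u,v\})$, classifies the edges at $u$ as meeting the exceptional set $B$, lying inside $A$, or belonging to $H_0$, and sums over $u$; the crucial cancellation is then local, hidden in the per-vertex estimate $d_{H_0}(u)-d_H(u)\le \rho n^3/3$, and no analogue of your bound (iii) on $e_1$ is needed because for $u$ in the big side the edges at $u$ outside $H_0$ are exactly those avoiding $W$. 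You instead run a single global double count: the decomposition by $|e\cap W'|$, the identity $\sum_{u\in U',v\in Q}d_H(\{u,v\})=3e_0+2e_1+e_2$, the trivial bound on $e_1$, and the explicit vanishing of the $n^4$-coefficient, with $e_0$ absorbed into $e(H[A])$ plus edges meeting $Q\setminus A$ or $W\setminus W'$. Your version makes the exact cancellation (and the reason cruder counts fail) visible, and it handles the case $|P\setminus A|<n/3$ directly by allowing $W'\supseteq W$ or $W'\subseteq W$, whereas the paper quietly renormalizes $A$ so that $|P\setminus A|\ge n/3$; the paper's per-vertex route avoids the extra $e_1$ correction and keeps all estimates at the level of link degrees. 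Both yield bounds comfortably below the closeness threshold $\varepsilon(4n/3)^4$ for large $n$, so your argument is a valid alternative proof.
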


\begin{proof}
	Suppose to the contrary that there exists $A\subseteq V(H)$ such
       that $|A\cap Q|\ge (1/3-\varepsilon/8) n$, $|A\cap P|\ge (2/3-\varepsilon/8) n$, and $e(H[A])\le \varepsilon e(H)/6$.
       Choose such $A$ that  $|P\setminus A|\ge n/3$ and let $W\subseteq P\setminus A$ such that
        $|W|=n/3$.
        Let $A_1=A\cap P$ and $A_2=A\cap Q$, and let $B_1=P\setminus W\setminus A_1$, $B_2=Q\setminus A_2$, and $B=B_1 \cup B_2$. Then
        $|A_1|\le 2n/3$ and, by the choice of $A$, $|B_1|\le \varepsilon n/8$ and $|B_2|\le \varepsilon n/8$.

        Let $U=P\setminus W=A_1\cup B_1$ and let $H_0$ denote the $H_{1,3}(n,n/3)$ with partition classes $Q, U,W$.
        We derive a contradiction by showing
        that $|E(H_0)\setminus E(H)|<\varepsilon n^4$.
        Note that each $f\in E(H_0)\setminus E(H)$ intersects $U$. So
        \[
        	|E(H_0)\setminus E(H)|\le |\{f\in E(H_0): f\cap B_1\ne \emptyset \}|+|\{f\in E(H_0)\setminus E(H): f\cap A_1\ne\emptyset\}|.
	\]
	
        Since $|B_1|\le \varepsilon n/8$, we have $|\{f\in E(H_0): f\cap B_1\ne \emptyset \}|\le |B_1||Q||P|^2/2\le \varepsilon n^4/48$.
	To bound $|\{f\in E(H_0)\setminus E(H): f\cap A_1\ne\emptyset\}|$, we note that, for each fixed $u\in A_1$,
       \[
       	|\{f\in E(H): u\in f,\ f\cap B\ne \emptyset\}|\le |B_1||P||Q|+|B_2||P|^2/2< \varepsilon n^3/8,
	\]
       and that ,  for each $f\in E(H)$ with $u\in f$, we have  $f\cap B\ne
        \emptyset$, or $f\subseteq A$, or $f\in E(H_0)$. So for any $u\in A_1$,
          \begin{align*}
             & |\{f\in E(H) : u\in f,\ f\in E(H_0)|\\
             \ge & d_H(u)-|\{f\in E(H) : u\in f,\ f\cap B\ne \emptyset\}| -|\{f\in E(H) : u\in f,\ f\subseteq A\}|\\
              \ge & d_H(u) -\varepsilon n^3/8-d_{H[A]}(u).
          \end{align*}
	Hence,
	\begin{align*}
		 & |\{f\in E(H_0)\setminus E(H): f\cap A_1\ne\emptyset\}|\\
		\le & \sum_{u\in A_1}|\{f\in E(H_0)\setminus E(H): u\in f\}|\\
		 \le &\sum_{u\in A_1}\left( d_{H_0}(u)- |\{f\in E(H) : u\in f,\ f\in E(H_0)|\right) \\
		\le & \sum_{u\in A_1}\left(d_{H_0}(u)- d_H(u)+\varepsilon n^3/8+ d_{H[A]}(u)\right).
	\end{align*}
	Since for $u\in A_1$, $d_{H_0}(u)=\left({n-1\choose 2}-{2n/3-1\choose 2}\right)n/3$
	and $d_H(u)=\sum_{v\in Q}d_H(\{u,v\})\ge \left({n-1\choose 2}-{2n/3\choose 2}-\rho n^2\right)n/3$, we have $d_{H_0}(u)-d_H(u)\le\rho n^3/3$ (for large $n$).
	Hence,
        \begin{align*}
		|E(H_0)\setminus E(H)|
		&\le \varepsilon n^4/48 + |A_1| \left(\rho/3+3\varepsilon/8 \right)n^3+\sum_{u\in A_1} d_{H[A]}(u)\\
		&\le \left(\varepsilon/48 + 4\rho/9 +
                  \varepsilon/4\right) n^4 + 3e(H[A]) \quad (\mbox{ since $|A_1|\le 2n/3$})\\
		&\le  \left(1/48+1/9+1/4\right) \varepsilon n^4 +
                  3\varepsilon n^4/6 \quad (\mbox{ since $e(H[A])\le
                  \varepsilon e(H)/6$})\\
		&< \varepsilon n^4,
	\end{align*}
	a contradiction.
\end{proof}


We now use Lemma~\ref{thm2.2} to control the independence number of a random subgraph.

\begin{lemma}\label{indep}
        Let $c, \varepsilon', \alpha_1,\alpha_2$ be positive reals, let $\gamma>0$ with $\gamma \ll \min\{\alpha_1,\alpha_2\}$, let $k,n$ be positive integers,
        and let $H$ be a $(1,3)$-partite $4$-graph with partition classes $Q,P$ such that $3|Q|=|P|=n$,   $e(H)\ge cn^4$,  and $e(H[F])\ge
        \varepsilon' e(H)$ for all $F\subseteq
        V(H)$ with $|F\cap P|\ge \alpha_1 n$ and $|F\cap Q|\ge \alpha_2 n$.
         Let $R\subseteq V(H)$ be obtained  by taking each vertex of
           $H$ uniformly at random with probability $n^{-0.9}$.
        Then, with probability at least $1-n^{O(1)}e^{-\Omega (n^{0.1})}$, every independent set $J$ in $H[R]$
             satisfies $|J\cap P|\le (\alpha_1 +\gamma+o(1))n^{0.1}$ or $|J\cap Q|\le (\alpha_2 +\gamma+o(1))n^{0.1}$.
\end{lemma}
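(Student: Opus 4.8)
The plan is to apply the container lemma (Lemma~\ref{thm2.2}) to $H$ with the increasing family obtained by ``upward closure'' of $\F$ from Lemma~\ref{dense}, and then to use a union bound over the (few) containers produced to control the independence number of the random subgraph $H[R]$. Concretely, set $\F_0=\{A\subseteq V(H): |A\cap P|\ge \alpha_1 n \text{ and } |A\cap Q|\ge \alpha_2 n\}$; this is \emph{not} increasing, so I first replace it with its up-set $\F=\{B: B\supseteq A \text{ for some } A\in\F_0\}=\{B:|B\cap P|\ge\alpha_1 n\text{ and }|B\cap Q|\ge\alpha_2 n\}$ --- which is already increasing, since enlarging $B$ only helps both coordinates. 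The hypothesis $e(H[F])\ge\varepsilon' e(H)$ for all $F$ with $|F\cap P|\ge\alpha_1 n$, $|F\cap Q|\ge\alpha_2 n$ is exactly $(\F,\varepsilon')$-density, and every $A\in\F$ has $|A|\ge(\alpha_1+\alpha_2)n\ge\varepsilon' v(H)$ after shrinking $\varepsilon'$ if necessary. The degree condition in Lemma~\ref{thm2.2} is easy to verify: since $H$ is $4$-uniform on $\tfrac43 n$ vertices with $e(H)\ge cn^4$, we have $e(H)/v(H)=\Theta(n^3)$, while $\Delta_l(H)\le\binom{\tfrac43 n}{4-l}=O(n^{4-l})$; so with $p=n^{-0.9}$ we need $n^{4-l}\le c' p^{l-1}n^3=c' n^{3-0.9(l-1)}$, i.e. $n^{1-l}\le c' n^{-0.9(l-1)}$, i.e. $0.9(l-1)\le l-1$, which holds for all $l\ge 1$. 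Hence Lemma~\ref{thm2.2} gives a container family $\S\subseteq\binom{V(H)}{\le C n^{0.1}}$ of size $|\S|\le\sum_{j\le Cn^{0.1}}\binom{\tfrac43 n}{j}\le n^{O(n^{0.1})}$, together with $f:\S\to\overline\F$ and $g:\I(H)\to\S$ so that every independent set $I$ satisfies $g(I)\subseteq I$ and $I\setminus g(I)\subseteq f(g(I))$.

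Next I extract the structural consequence. Fix $I\in\I(H[R])\subseteq\I(H)$ and let $S=g(I)\in\S$, $C_S:=f(S)\in\overline\F$. Since $C_S\notin\F$, either $|C_S\cap P|<\alpha_1 n$ or $|C_S\cap Q|<\alpha_2 n$. Also $I\subseteq S\cup C_S$, so $I\cap R\subseteq (S\cap R)\cup(C_S\cap R)$, hence $|I\cap P|\le |S\cap R\cap P|+|C_S\cap R\cap P|$ and similarly for $Q$. Because $|S|\le Cn^{0.1}$, trivially $|S\cap R|\le Cn^{0.1}=o(n^{0.1})$ relative to nothing --- actually I should be slightly careful: $Cn^{0.1}$ is not $o(n^{0.1})$, so instead I note $|S\cap R\cap P|\le|S\cap P|$ and I will absorb this term into the final bound by observing that what actually matters is the size of $C_S\cap R$, with the $S$-contribution at most $Cn^{0.1}$, and then the $\gamma$ slack is chosen so that the container term dominates. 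The key point is that for each fixed container $C\in f(\S)$ with, say, $|C\cap P|<\alpha_1 n$, the random variable $|C\cap R\cap P|$ is a sum of independent indicators with mean $<\alpha_1 n\cdot n^{-0.9}=\alpha_1 n^{0.1}$, so by a Chernoff bound $\Pr[\,|C\cap R\cap P|\ge(\alpha_1+\gamma/2)n^{0.1}\,]\le e^{-\Omega(\gamma^2 n^{0.1})}$.

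Then I take a union bound over all containers. There are $|\S|\le n^{O(n^{0.1})}$ containers, but this is too many to beat $e^{-\Omega(n^{0.1})}$ directly --- so here is the standard fix: I only need to union-bound over $\S$, and $|\S|\le\binom{\tfrac43 n}{\le Cn^{0.1}}\le (n)^{Cn^{0.1}}=e^{O(n^{0.1}\log n)}$. This beats $e^{-\Omega(\gamma^2 n^{0.1}\cdot ?)}$ only if the exponent in the Chernoff bound also carries a $\log n$; and indeed it does, because one actually applies the Chernoff bound at deviation $\gamma n^{0.1}$ where $\gamma$ is a \emph{constant}, giving $e^{-c\gamma^2 n^{0.1}}$ --- not enough. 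The real fix, which is why the paper takes $p=n^{-0.9}$ rather than constant $p$, is that the relevant large-deviation event can be made to have probability $e^{-\Omega(n^{0.1})}$ \emph{with a hidden polynomial improvement}: one uses that a sum of $\le n$ independent Bernoulli$(n^{-0.9})$ variables exceeding its mean by an additive $\gamma n^{0.1}$ has probability $\le e^{-\Omega(n^{0.1})}$, and $\log|\S|=O(n^{0.1}\log n)$; to close the gap one replaces $\gamma$ by a term that grows, or --- as the statement already allows --- one folds a $(1+o(1))$ and an extra $\log n$ factor into the ``$o(1)$'' and takes the deviation to be $\gamma n^{0.1}$ with the understanding that $n^{O(1)}e^{-\Omega(n^{0.1})}$ in the conclusion is after noting $|\S|$ is only quasipolynomial and the per-container failure probability is $e^{-\Omega(n^{0.1}\log n)}$ once one observes $\E|C\cap R|\le Cn^{0.1}$ is already $o(n^{0.1}\cdot$ anything$)$\,---. \textbf{The main obstacle} is exactly this bookkeeping: balancing $\log|\S|=O(n^{0.1}\log n)$ against the Chernoff tail, which forces the deviation in the statement to be phrased with the $+\gamma+o(1)$ slack and the failure probability as $n^{O(1)}e^{-\Omega(n^{0.1})}$; I would handle it by first conditioning on $|R|\le 2\,\tfrac43 n^{0.1}$ (a Chernoff event), restricting attention to containers $C$ with $|C\cap R|\le 2\,\tfrac43 n^{0.1}$, so that there are at most $\binom{2n^{0.1}\cdot 4/3}{\le 2n^{0.1}\cdot 4/3}=2^{O(n^{0.1})}$ \emph{relevant} traces $C\cap R$ to union-bound over, defeating the $\log n$ loss, and then applying the Chernoff bound on the small side of $C$ to each. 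Finally, for every independent $J$ in $H[R]$, writing $C=f(g(J))$ and using $J\subseteq g(J)\cup C$ together with $|g(J)|\le Cn^{0.1}$ --- wait, $Cn^{0.1}$ is not $o(n^{0.1})$; so I instead must route the $g(J)$ part through the observation that $g(J)\subseteq J\subseteq R$ and $g(J)$ contributes at most $|g(J)\cap P|$, which I bound by noting $g(J)$ itself need not be small on the large side --- the cleanest route is to not separate $g(J)$ at all but use $J\subseteq f(g(J))\cup g(J)$ with $g(J)\subseteq J$, so $|J\cap P|\le|f(g(J))\cap R\cap P|+|g(J)\cap P|$ and then bound $|g(J)\cap P|\le|J\cap P|$ is circular --- the correct standard move is: $C':=f(g(J))\cup g(J)$ is itself a container of size $\le 2Cn^{0.1}$, it is \emph{not} in $\F$ up to the small additive $g(J)$ part, and $J\cap R\subseteq C'\cap R$; since on the small side $|C'\cap(\text{small class})|<\alpha_i n+2Cn^{0.1}$, a Chernoff bound over the $2^{O(n^{0.1})}$ relevant traces gives $|J\cap(\text{small class})|\le(\alpha_i+\gamma+o(1))n^{0.1}$ with the claimed probability, which is exactly the conclusion.
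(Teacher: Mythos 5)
Your overall strategy (containers for $H$, then Chernoff plus a union bound over containers to control $|J\cap P|$, $|J\cap Q|$ for independent sets $J\subseteq R$) is the same as the paper's, and your choice of increasing family (the up-set of sets with $|A\cap P|\ge\alpha_1 n$, $|A\cap Q|\ge\alpha_2 n$) is a legitimate variant of the paper's edge-density family. But there is a genuine gap at exactly the point you flag as ``the main obstacle,'' and your proposed repairs do not close it. The missing idea is that the container-lemma parameter $p$ has nothing to do with the sampling probability $n^{-0.9}$: the paper applies Lemma~\ref{thm2.2} with $p=n^{-1}$, which is permitted because $e(H)\ge cn^4$ gives $e(H)/v(H)=\Omega(n^3)$ while $\Delta_l(H)=O(n^{4-l})$, so $\Delta_l(H)\le c'p^{l-1}e(H)/v(H)$ holds even for this much smaller $p$. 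With $p=n^{-1}$ the fingerprints in $\S$ have size at most $Cpv(H)=O(1)$, hence there are only $n^{O(1)}$ fingerprints and only $n^{O(1)}$ sets $T=g$-fingerprint $\cup$ container. A union bound of $n^{O(1)}$ events, each failing with probability $e^{-\Omega(n^{0.1})}$ by Chernoff applied to $|R\cap T'\cap P|$ and $|R\cap T'\cap Q|$ (after padding $T$ on its small side to size $\lceil\alpha_i n+C\rceil$), gives precisely the claimed $1-n^{O(1)}e^{-\Omega(n^{0.1})}$, and the constant-size fingerprint is what makes the additive error $C$ rather than $Cn^{0.1}$, so the slack is genuinely $+\gamma+o(1)$. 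Your choice $p=n^{-0.9}$ instead produces fingerprints of size up to $Cn^{0.1}$ and $|\S|=e^{\Theta(n^{0.1}\log n)}$, which, as you correctly observe, cannot be beaten by a tail of $e^{-\Omega(n^{0.1})}$; and it also makes the $|g(J)\cap P|$ term comparable to the target bound, the second difficulty you run into.

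Your attempted fixes do not work as stated. Folding a $\log n$ factor ``into the $o(1)$'' does not help: the $o(1)$ in the conclusion is an additive term in the bound on $|J\cap P|$, not an exponent, and the union bound $e^{O(n^{0.1}\log n)}\cdot e^{-cn^{0.1}}$ is simply vacuous. The ``relevant traces'' idea is not a valid probabilistic argument: the traces $C\cap R$ are random sets determined by $R$, so you cannot fix a trace in advance and apply a Chernoff bound to it, and once you condition on $R$ there is no randomness left; if instead you union over deterministic fingerprints (restricting to those contained in $R$ and weighting by $\Pr[S\subseteq R]=(n^{-0.9})^{|S|}$), the count of fingerprints of size $s$ is about $(4n/3)^s$, so the sum contributes $\sum_s(4n^{0.1}/3)^s=e^{\Theta(n^{0.1}\log n)}$ and the same loss reappears. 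In short, the argument cannot be rescued with $p=n^{-0.9}$; taking $p=1/n$ in the container lemma is the essential step, and with it the rest of your outline (structural dichotomy for containers, padding, Chernoff, union bound, and $J\subseteq g(J)\cup f(g(J))$) goes through exactly as in the paper.
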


   \pf
        Define $\F :=\left\{A\subseteq V(H) \ : \  e(H[A])\ge \varepsilon'
          e(H) \mbox{ and } |A|\ge \varepsilon' n\right\}$.
        Then $\F$ is an increasing family,  and $H$ is $(\F, \varepsilon')$-dense.
        Let $p=n^{-1}$ and $v(H)=4n/3$. Then, for $l\in [4]$,
        \[\Delta_l(H)\le \binom{4n/3}{4-l}\le (4n/3)^{4-l}\le (4/3)^{4-l}c^{-1}n^{-l}e(H)=(4/3)^{4-l+1}c^{-1} p^{l-1}\frac{e(H)}{v(H)}.
        \]
        Thus by Lemma~\ref{thm2.2}, there exist constant $C$,
        family $\S\subseteq \binom{V(H)}{\le C}$, and function $f: \S\to \overline{\F}$,
        such that every independent set in $H$ is contained in some $T\in \T:= \left\{F\cup S :  F\in f(\S) , S\in \S \right\}$.
        Since $\S\subseteq \binom{V(H)}{\le C}$, $|\S|\le C (4n/3)^C$
        and, hence, $$|\T|=|\S| |f(\S)|\le |\S|^2\le C^2(4n/3)^{2C}.$$

        Since   for $T\in \T$ it is possible that $|T\cap P|< \alpha_1 n+C$ or $|T\cap Q|< \alpha_2 n +C$,  we need to make the sets in
        $\T$ slightly larger in order to apply Chernoff's inequality.
        For each $T\in \T$, let $T'$ be a set obtained from $T$ by adding vertices such that
        $|T'\cap P|=\max\{|T\cap P|, \lceil \alpha_1 n+C \rceil\}$ and $|T'\cap Q|=\max\{|T\cap Q|,   \lceil \alpha_2 n +C \rceil\}$.
         Let $\T':=\{T': T\in \T\}$.  Then      $$|\T'|\le |\T|\le  C^2(4n/3)^{2C}.$$

         Note that for each fixed $T'\in \T'$, we have $|R\cap T'\cap P|\sim Bi\left(|T'\cap P|, n^{-0.9} \right)$ and
        $|R\cap T'\cap Q|\sim Bi\left(|T'\cap Q|, n^{-0.9} \right)$. Hence, $\E (|R\cap T'\cap P|)=n^{-0.9}|T'\cap P|$ and
        $\E (|R\cap T'\cap Q|)=n^{-0.9}|T'\cap Q|$.
        Applying Chernoff's bound to $|R\cap T'\cap P|$ and $|R\cap T'\cap Q|$ by taking
        $\lambda= \gamma n^{0.1}$, we have,
        \begin{align*}
        	\P\left(\big| |R\cap T'\cap P| - n^{-0.9}|T'\cap P|  \big| \ge \lambda  \right) \le e^{-\Omega(\lambda^2/ (n^{-0.9}|T'\cap P|)}
	&\le e^{-\Omega(n^{0.1})}, \mbox{ and } \\
        	 \quad \P\left(\big| |R\cap T'\cap Q| - n^{-0.9}|T'\cap Q|  \big| \ge \lambda  \right) \le e^{-\Omega(\lambda^2/ (n^{-0.9}|T'\cap Q|)}
	&\le  e^{-\Omega(n^{0.1})}.
	\end{align*}
        So with probability at most $2e^{-\Omega(n^{0.1})}$, $|R\cap
        T'\cap P|\ge n^{-0.9}|T'\cap P|+\lambda\ge  (\alpha_1 +\gamma + C/n)n^{0.1}$ and
        $|R\cap T'\cap Q|\ge n^{-0.9}|T'\cap Q|+\lambda\ge (\alpha_2 +\gamma + C/n)n^{0.1}$.

        Therefore,
        with
        probability at most $2C^2n^{2C}e^{-\Omega(n^{0.1})}$, there exists some $T'\in \T'$ such that
       $|R\cap T'\cap P|\ge (\alpha_1 +\gamma + C/n)n^{0.1}$ and $|R\cap T'\cap Q|\ge (\alpha_2 +\gamma + C/n)n^{0.1}$.
        Hence, with probability at least $1-2C^2n^{2C}e^{-\Omega(n^{0.1})}$,
        $|R\cap T'\cap P|< (\alpha_1 +\gamma + C/n)n^{0.1}$ or $|R\cap T'\cap Q|< (\alpha_2 +\gamma + C/n)n^{0.1}$ for all $T'\in \T'$.

        Now let $J$ be an  independent set in $H[R]$. Then $J$ is also an independent set in $H$; so there
        exist  $T\in \T$ and $T'\in \T'$ such that $J\subseteq T\subseteq T'$.
        Thus $J\subseteq R\cap T'$; so $|J\cap P|\le |R\cap T'\cap P|$ and $|J\cap Q|\le |R\cap T'\cap Q|$.  Hence,
        with probability at least $1-2C^2n^{2C} e^{-\Omega
          (n^{0.1})}$,  $|J\cap P|\le (\alpha_1 +\gamma+C/n)n^{0.1}$
      or $|J\cap Q|\le (\alpha_2 +\gamma+C/n)n^{0.1}$. \qed

\medskip

To show that a $(1,3)$-partite 4-graph with no large independent set
has a perfect fractional matching,
we need a result from \cite{LYY} about stable 2-graphs. A 2-graph $G$ is {\it stable}
with respect to a labeling $u_1,\ldots, u_n$ of its vertices  if, for
any $i,j,k,l\in [n]$ with $k\le i$ and $l\le j$, $u_iu_j\in E(G)$ implies $u_ku_l\in E(G)$.

\begin{lemma}\label{3-graph-frac}
Let $c, \rho$ be constants such that $0<\rho\ll 1$ and $0<c<1/2$, let $m, n$ be positive integers such that
$n$ is sufficiently large and $cn\leq m\leq n/2-1$, and  let $G$ be a
2-graph with   $\nu(G)\leq m$.
Suppose $G$ is stable with respect the ordering of its vertices $u_1, \ldots, u_n$.
If $e(G)> {n\choose  2}-{n-m\choose 2}-\rho n^2$, then
$G$ is $2\sqrt{\rho}$-close to the graph with vertex $V(G)$ and edge
set $\{e\in {V(G)\choose 2}: e\cap \{u_i: i\in [n/3-1]\}\ne \emptyset$.

\end{lemma}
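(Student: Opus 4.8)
The plan is to exploit the stability of $G$ to show that, apart from a small ``error region,'' $G$ must look like the graph $B$ whose edges are exactly those meeting the first $n/3-1$ vertices. The key structural fact about a stable graph is that its edge set is a \emph{down-set} in the coordinatewise order on $\binom{[n]}{2}$: if $u_iu_j\in E(G)$ with $i<j$, then $u_ku_l\in E(G)$ for all $k\le i$, $l\le j$. Consequently $E(G)$ is determined by its ``staircase boundary,'' and the matching number $\nu(G)\le m$ forces this staircase to lie low. First I would record the extremal structure: among stable graphs on $n$ vertices with $\nu(G)\le m$, the one with the most edges is precisely $B_m:=\{e: e\cap\{u_1,\dots,u_m\}\ne\emptyset\}$, which has $\binom n2-\binom{n-m}2$ edges; this is a stable-graph analogue of the Erd\H os--Gallai bound and should follow directly from the down-set property (any stable graph with an edge $u_iu_j$, $i>m$, $j>m$, together with the forced edges below it, contains a matching of size $>m$).

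Next I would quantify the near-extremal case. Since $e(G)>\binom n2-\binom{n-m}2-\rho n^2$ and $m\le n/2-1$, the graph $G$ is within $\rho n^2$ edges of $B_m$ in edge count. The goal is to upgrade this to $B:=B_{n/3-1}$ being $2\sqrt\rho$-close to $G$, i.e. $|E(B)\setminus E(G)|<2\sqrt\rho n^2$. The main step is a counting/averaging argument on the low-index vertices. Let $S=\{u_1,\dots,u_{n/3-1}\}$. By the down-set property, for each $i\le n/3-1$ the vertex $u_i$ has degree at least that of $u_{n/3-1}$; and I would argue that the number of non-edges incident to $S$ is small: if too many pairs $u_iu_j$ with $i\le n/3-1$ were \emph{non}-edges, then by stability $G$ would have few edges overall, contradicting $e(G)>\binom n2-\binom{n-m}2-\rho n^2$ (one gets a quantitative deficiency of order $n^2$ rather than $\rho n^2$). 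More precisely, I would set $d:=\deg_G(u_{n/3-1})$, note $\sum_{i\le n/3-1}\deg_G(u_i)\le e(B)-|E(B)\setminus E(G)|$ and also that the edges \emph{not} meeting $S$ are confined to $\binom{\{u_{n/3},\dots,u_n\}}{2}$ but, by the matching bound, must themselves form a stable graph of matching number $\le m$, hence are also edge-bounded; combining these with $e(G)>\binom n2-\binom{n-m}2-\rho n^2$ pins down $|E(B)\setminus E(G)|$ to be $O(\rho n^2)$, and chasing the constants (using $\sqrt\rho\gg\rho$ and $m\ge cn$) gives the clean bound $2\sqrt\rho n^2$.

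The step I expect to be the main obstacle is the \emph{two-sided} comparison with the specific threshold $n/3-1$ rather than with $m$ itself. We are told only $cn\le m\le n/2-1$, so $m$ and $n/3-1$ need not be close; the point must be that when $m$ is large, $B_m$ has \emph{many} edges meeting indices in $(n/3-1,m]$, and these edges, by stability together with the matching constraint, cannot all be present --- so the true near-extremal configuration is forced to shed edges precisely down to the $B$ pattern, absorbing the discrepancy between $m$ and $n/3-1$ into the $2\sqrt\rho n^2$ slack. Making this trade-off rigorous --- showing that the ``profit'' from edges above index $n/3-1$ is offset by an equal ``loss'' elsewhere forced by $\nu(G)\le m$, so that $B$ (not $B_m$) is the right comparison graph up to $2\sqrt\rho n^2$ edges --- is the delicate part. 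I would handle it by working directly with the staircase profile $f(i):=\max\{j: u_iu_j\in E(G)\}$ (a non-increasing function), writing $e(G)$ and $\nu(G)$ in terms of $f$, and optimizing: the constraint $\nu(G)\le m$ translates to a bound on how long $f$ can stay above the diagonal, and a short convexity/rearrangement argument then shows the edge surplus forces $f(i)=n$ for essentially all $i\le n/3-1$ and $f(i)\le n/3-1$ for essentially all $i> n/3-1$, up to $O(\sqrt\rho n^2)$ exceptional pairs in total. Everything else is routine bookkeeping with binomial coefficients.
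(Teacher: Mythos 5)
You cannot be faulted for not matching the paper's argument here, because there is none: Lemma~\ref{3-graph-frac} is not proved in this paper at all, it is quoted from \cite{LYY}. So your proposal has to stand on its own, and as written it has a genuine gap at its very first, load-bearing step. You claim that among stable graphs with $\nu(G)\le m$ the maximizer is $B_m=\{e:e\cap\{u_1,\dots,u_m\}\ne\emptyset\}$, justified by saying that an edge $u_iu_j$ with $i,j>m$, together with the edges forced below it, yields a matching of size $m+1$. That is false: the edge $u_{m+1}u_{m+2}$ only forces the clique on $\{u_1,\dots,u_{m+2}\}$, whose matching number is $\lfloor (m+2)/2\rfloor\le m$. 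In a stable graph with $\nu(G)\le m$ what is actually forbidden is an edge $u_iu_j$ with $i\ge m+1$ \emph{and} $j\ge 2m+2$; the complete graph on $\{u_1,\dots,u_{2m+1}\}$ is stable, has $\nu=m$, and for $m$ above roughly $2n/5$ has \emph{more} edges than $B_m$.

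This is not a repairable detail within the stated range of $m$, because the lemma as printed fails there, so the ``delicate trade-off between $m$ and $n/3-1$'' that you flag as the main obstacle cannot be absorbed by a cleverer rearrangement argument. Concretely, for $m=0.45n$ the clique on $\{u_1,\dots,u_{2m+1}\}$ satisfies every hypothesis (about $0.405n^2$ edges versus the required $\approx 0.349n^2-\rho n^2$), yet $|E(B_{n/3-1})\setminus E(G)|\approx (n/3)(n-2m-1)\approx n^2/30$, which is not below $2\sqrt{\rho}\,n^2$ once $\rho$ is small; and for $m=cn$ with $c$ small, any $G$ with $\nu(G)\le m$ has only $O(cn^2)$ edges and so cannot contain all but $2\sqrt{\rho}\,n^2$ of the $\approx\tfrac{5}{18}n^2$ edges of $B_{n/3-1}$. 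The statement is only used (via $\nu(G)<n/3$ in Lemma~\ref{pfrac}) with $m=n/3-1$, where the cover construction is indeed the unique near-extremal stable configuration; your staircase-profile plan is plausible in that regime (the correct coupling being that an edge avoiding $\{u_1,\dots,u_m\}$ forces, via stability and $\nu\le m$, the absence of many edges of $B_m$, not merely an edge bound on the high part alone), but you would need to restrict $m$ to be near $n/3$ (or prove a version whose conclusion is closeness to $B_m$, with $m\le(2/5-o(1))n$) and then actually carry out the counting you defer as routine bookkeeping, including the derivation of the $2\sqrt{\rho}$ constant.
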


\medskip

We now prove the main result of this section.
A {\it fractional matching} in a $k$-graph
  $H$ is a function $w: E\rightarrow [0,1]$ such that for any
  $v\in V(H)$, $\sum_{\{e\in E: v\in e\}}w(e)\le 1$. A fractional matching is  {\it perfect} if $\sum_{e\in E}w(e)=|V(H)|/k$.

\begin{lemma}\label{pfrac}
Let $\rho, \varepsilon$ be constants with  $0<\varepsilon\ll 1$ and $0< \rho <\varepsilon^{12}$, and
let $H$ be a $(1,3)$-partite $4$-graph with partition classes $Q,P$ such that  $3|Q|=|P|=n$.
Suppose
$d_H(\{u,v\})> {n-1\choose 2}-{2n/3\choose 2}-\rho n^2$ for any $v\in Q$ and $u\in P$. If
$H$ contains no independent set $S$ with
$|S\cap Q|\geq n/3-\varepsilon^2 n$ and $|S\cap P|\geq
2n/3-\varepsilon^2 n$, then $H$ contains a perfect fractional matching.

\end{lemma}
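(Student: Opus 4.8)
The plan is to argue by contradiction through LP duality. Since every edge of $H$ meets $Q$ in exactly one vertex, $\nu^*(H)\le |Q|=n/3$, so $H$ has a perfect fractional matching if and only if $\nu^*(H)=n/3$, i.e. (by LP duality) if and only if every fractional vertex cover of $H$ has weight at least $n/3$. So suppose instead that $y\colon V(H)\to[0,1]$ (we may cap the values at $1$ without losing feasibility or increasing the weight) is a fractional vertex cover with $\sum_{v}y(v)<n/3$. Writing $z(q):=1-y(q)$ for $q\in Q$, this says $\sum_{q\in Q}z(q)>\sum_{p\in P}y(p)=:b$. The elementary but crucial observation is that for every $\beta\in(0,1]$ the set $Q_\beta\cup P_\beta$, with $Q_\beta:=\{q\in Q: z(q)\ge\beta\}$ and $P_\beta:=\{p\in P: y(p)<\beta/3\}$, is independent in $H$: any edge $\{q\}\cup f$ contained in it would have $y$-weight $<(1-\beta)+3\cdot(\beta/3)=1$, contradicting the cover inequality. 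Hence it suffices to exhibit one $\beta$ with $|Q_\beta|\ge n/3-\varepsilon^2 n$ and $|P_\beta|\ge 2n/3-\varepsilon^2 n$, which contradicts the hypothesis.

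To produce such a $\beta$ one must show that $z$ is close to $1$ on almost all of $Q$ while $y$ is close to $0$ on almost all of $P$ --- in other words, that $y$ is essentially the canonical ``half-integral'' cover of some $H_{1,3}(n,n/3-1)$ on $V(H)$ (weight $1/2$ on $Q$ and on a suitable $(n/3-1)$-subset $W\subseteq P$, weight $0$ elsewhere; this cover has weight $n/3-1/2<n/3$). A convenient first reduction is: if the link $3$-graph $N_H(q)$ on $P$ had a perfect fractional matching for \emph{every} $q\in Q$, then $H$ would have one too, since scaling each link's perfect fractional matching (of total weight $|P|/3=n/3$) by the factor $3/n$ and adding the $n/3$ resulting weight systems yields a perfect fractional matching of $H$ in which each $p\in P$ is used $\le (3/n)\cdot|Q|=1$. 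So some link $N_H(q_0)$ has no perfect fractional matching; being a $3$-graph with $\delta_1(N_H(q_0))>\binom{n-1}{2}-\binom{2n/3}{2}-\rho n^2$, it lies just below the vertex-degree threshold for perfect (fractional) matchings in $3$-graphs, and a stability argument should force it to be close to $H(n,n/3-1)$, in particular to contain a set $S_0\subseteq P$ with $|S_0|\ge 2n/3-O(\sqrt{\rho})n$ spanning no edge of $N_H(q_0)$.

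The heart of the proof is to upgrade this local structure to a global one while carrying the degree hypothesis through. A vertex $q$ with $z(q)\ge\beta$ forces $N_H(q)$ to lie inside the ``$\beta$-heavy'' down-set $\{f\in\binom{P}{3}:\sum_{p\in f}y(p)\ge\beta\}$, which is a stable $3$-graph; combining this with the pair-degree bound $d_H(\{u,v\})>\binom{n-1}{2}-\binom{2n/3}{2}-\rho n^2$ and with Lemma~\ref{3-graph-frac} (applied to suitable link/codegree $2$-graphs of these near-extremal $3$-graphs) should pin $y$ down to within $O(\sqrt{\rho})n$ of being $1/2$ on a fixed $(n/3-1)$-set $W\subseteq P$ and $0$ off it, and pin $z$ down to within $O(\sqrt{\rho})n$ of being $1/2$ on all of $Q$. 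Choosing $\beta$ a small constant then gives $Q_\beta=Q$ and $P_\beta\supseteq P\setminus W$, so $|Q_\beta|=n/3$ and $|P_\beta|\ge 2n/3-O(\sqrt{\rho})n\ge 2n/3-\varepsilon^2 n$ (using $\rho<\varepsilon^{12}$), which produces an independent set of the forbidden shape, contradicting the hypothesis and finishing the proof.

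I expect this last, structural step to be the main obstacle. A crude Markov-type counting only yields $\sum_{q}z(q)=O(\sum_p y(p))$ with an absolute constant larger than $1$, which is entirely consistent with $\sum_q z(q)>\sum_p y(p)$, hence with a cheap cover; so one genuinely needs the no-large-independent-set hypothesis together with the fine stability information of Lemma~\ref{3-graph-frac}. More delicately still, one must show that the near-extremal structures of the links of the many relevant $q$'s all ``point the same way'' --- that they share one common $(n/3-1)$-set $W\subseteq P$ --- so that a single independent set of the required shape emerges rather than many incompatible ones.
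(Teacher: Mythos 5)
Your LP-duality framing and the level-set observation are fine: capping $y$ at $1$, every edge $\{q\}\cup f$ with $1-y(q)\ge\beta$ and $y(p)<\beta/3$ for all $p\in f$ would have cover-weight $<1$, so $Q_\beta\cup P_\beta$ is indeed independent in $H$, and exhibiting one $\beta$ with $|Q_\beta|\ge n/3-\varepsilon^2n$ and $|P_\beta|\ge 2n/3-\varepsilon^2n$ would finish the proof. The genuine gap is that this last step — the entire content of the lemma — is never carried out. You assert that any fractional vertex cover of weight $<n/3$ must be within $O(\sqrt{\rho})n$ of the half-integral cover (weight $1/2$ on $Q$ and on a fixed $(n/3-1)$-set $W\subseteq P$), but you give no argument, only "should pin $y$ down" and "I expect this to be the main obstacle." The tools you point to do not deliver it as stated: Lemma~\ref{3-graph-frac} is a statement about \emph{stable $2$-graphs with bounded matching number}, not about $3$-graph links with vertex degree $\rho n^2$ below the threshold and no perfect fractional matching, so the stability claim for $N_H(q_0)$ is itself an unproved (and nontrivial) intermediate theorem; and even granting it, you explicitly leave open the alignment problem of showing that the near-extremal structures of the many relevant links share a single set $W$, without which no single independent set of the forbidden shape emerges. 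As it stands the proposal is a plan whose crucial middle is missing, and that middle is not routine.

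For comparison, the paper also starts from a minimum fractional vertex cover $\omega$, but it never tries to classify $\omega$. Instead it orders the vertices by weight and passes to the auxiliary $(1,3)$-partite $4$-graph $H'$ of all balanced $4$-sets of $\omega$-weight $\ge 1$; since $\omega$ stays a minimum cover of $H'\supseteq H$, duality gives $\nu_f(H)=\nu_f(H')$, and it then suffices to find an \emph{integer} perfect matching in $H'$. The stability input is applied where it is actually available: $H'$ is stable by construction, so the $2$-graph $G=N_{H'}(\{v_{n/3},u_n\})$ is a stable graph with $e(G)>\binom{n-1}{2}-\binom{2n/3}{2}-\rho n^2$, to which Lemma~\ref{3-graph-frac} applies directly when $\nu(G)<n/3$; the no-large-independent-set hypothesis on $H$ is used only to greedily extract a small matching $M_0$ of size $\sqrt{2\sqrt{\rho}}\,n$ avoiding the heaviest $n/3$ vertices of $P$, after which high degrees in $G$ and stability of $H'$ complete the perfect matching. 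This route sidesteps exactly the structural analysis of covers (and of $3$-graph links without perfect fractional matchings) that your sketch leaves open; if you want to salvage your approach, you would need to prove the missing stability-and-alignment statement, which is comparable in difficulty to the lemma itself.
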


\begin{proof}
Let $\omega:V(H)\rightarrow \mathbb{R}^+\cup \{0\}$ be a minimum
fractional vertex cover of $H$, i.e., $\sum_{x\in e}\omega(x)\ge 1$
for $e\in E(H)$ and, subject to this, $\sum_{x\in V(H)}\omega(x)$ is
minimum.
Let $P=\{u_1,\ldots,u_n\}$ and $Q=\{v_1, \ldots, v_{n/3}\}$, such that $\omega(v_1)\geq \cdots \geq \omega(v_{n/3})$ and $\omega(u_1)\geq \cdots \geq \omega(u_n)$.
Let $H'$ be the $(1,3)$-partite $4$-graph with vertex set $V(H)$ and edge set $E(H')=E'$, where
\[
E'=\left\{e\in {V(H)\choose 4}\ : \ |e\cap Q|=1 \ \mbox{and }\sum_{x\in e}\omega(x)\geq 1\right\}.
\]

Note that $\omega$ is also a minimum fractional vertex cover of $H'$. So $\omega(H)=\omega(H')$, where $\omega(H):=\sum_{v\in V(H)}\omega(v)$ and
$\omega(H'):=\sum_{v\in V(H')}\omega(v)$.
Let $\nu_f(H)$ and $\nu_f(H')$ denote the maximum fractional matching numbers of $H$ and $H'$, respectively;
then by the Strong Duality Theorem of linear programming, $\nu_f(H)=\omega(H)$ and  $\nu_f(H')=\omega(H')$.
Thus $\nu_f(H)=\nu_f(H')$ and, hence, it suffices to show that $H'$ has a perfect matching.

Next, we observe that the edges of $H'$ form a stable family with respect to the above ordering of vertices in $P$ and $Q$:
for any  $e_1=\{v_{i_1},u_{i_2},u_{i_3},u_{i_4}\}$ and $e_2=\{v_{j_1},u_{j_2},u_{j_3},u_{j_4}\}$ with
$i_l\geq j_l$ for $1\leq l\leq 4$, $e_2\in E(H')$ implies  $e_1\in E(H')$.
To see this, note that, since $i_l\geq j_l$ for $1\leq l\leq 4$,
we have $\omega(v_{i_1})\geq \omega(v_{j_1})$ and $\omega(u_{i_l})\geq \omega(u_{j_l})$ for $2\leq l\leq 4$.
If $e_2\in E(H')$ then $ \sum_{x\in e_2}\omega(x)\geq 1$; so $\sum_{x\in e_1}\omega(x)\geq 1$ and, hence,  $e_1\in E(H')$.

Let $G$ denote the graph with vertex set $P$ and edge set formed by
$N_{H'}(\{v_{n/3},u_n\})$. Then $G$ is stable with respect to $u_1,\ldots,u_n$. Note that $e(G)>{n-1\choose 2}-{2n/3\choose 2}-\rho n^2$ (by assumption).
Since the edges of $H'$ form a stable family,  $\{u,v\}\cup e\in E(H')$
for all $u\in P, v\in Q$, and  $e\in E(G)$.
Thus, if $G$ contains a matching  $M:=\{e_1, \ldots, e_{n/3}\}$ then
let $x_1,\ldots, x_{n/3}\in P\setminus V(M)$; we see that
  $\{\{v_i, x_i\}\cup e_i\in E(H'): i\in [n/3]\}$ is a perfect matching in $H'$.

Thus, we may assume $\nu(G)< n/3$.
Hence, by Lemma~\ref{3-graph-frac}, $G$ is $2\sqrt{\rho}$-close to the graph with vertex $V(G)$ and edge
set $\{e\in {V(G)\choose 2}: e\cap \{u_i: i\in [n/3-1]\}\ne \emptyset$. Recall that $e(G)>{n-1\choose 2}-{2n/3\choose 2}-\rho n^2$.
Therefore, $G$ has at most $\sqrt{2\sqrt{\rho}}n$ vertices in $\{u_j\ |\ j\in [n/3-1]\}$ of degree less than $n-1-\sqrt{2\sqrt{\rho}}n$.
Since $G$ is stable with respect to $u_1,\ldots, u_n$, we have
$d_G(u_{n/3-\sqrt{2\sqrt{\rho}}n})\ge n-1-\sqrt{2\sqrt{\rho}}n$.

Since $\rho<\varepsilon^{12}$ and $H$ contains no independent set $S$  such that
$|S\cap Q|\geq n/3-\varepsilon^2 n$ and $|S\cap P|\geq 2n/3-\varepsilon^2 n$, we may form a matching $M_0$ of size $\sqrt{2\sqrt{\rho}}n$ in
$H-\{u_1,\ldots, u_{n/3}\}$ by greedily choosing edges.

Since $d_G(u_{n/3-\sqrt{2\sqrt{\rho}}n})\ge n-1-\sqrt{2\sqrt{\rho}}n$, $G-V(M_0)$ has a matching $M$ of size $n/3-\sqrt{2\sqrt{\rho}}n$ which can be found by
greedily choosing distinct neighbors of $u_i$,  $1\le i\le
n/3-\sqrt{2\sqrt{\rho}}n$, in $V(G)\setminus V(M_0)$.
 Since  $\{u,v\}\cup e\in E(H')$ for $u\in P, v\in Q$, and $e\in M$,
we may extend $M$ to a matching $M'$ of size $|M|$ in $H'-M_0$.
Then $M'\cup M_0$ gives a perfect matching in $H'$.
\end{proof}

\section{Almost perfect matching}

In this section, we use Lemmas~\ref{pfrac} and ~\ref{indsub} to find a ``near regular" spanning subgraph of $H$.
The discussion here follows that in \cite{AFH12}.
We need to find a sequence of random subgraphs of a balanced (1,3)-partite 4-graph
 and use them to find a subgraph on which a ``R\"odl nibble" result can be applied.

 First, we show how to find such a sequence.
 The following result is a lemma in \cite{LYY}, which was essentially the first of the two round randomization in \cite{AFH12}.

 \begin{lemma}\label{lem1-5}
        Let $n>k>d>0$ be integers with $k\ge 3$
       and let $H$ be a $k$-graph on $n$ vertices. {\color{blue}Let $0<c<1$ be a constant and let $J\subseteq V(H)$ such that $|J|=cn$.}
	Take $n^{1.1}$ independent copies of $R$ and denote them by $R^i$, $1\le i\le n^{1.1}$, where $R$ is chosen from $V(H)$ by taking each vertex uniformly at random with probability $n^{-0.9}$ and then deleting less than $k$ vertices  uniformly at random so that $|R|\in k\Z$.
        For each $X\subseteq V(H)$, let $Y_X:=|\{i: \ X\subseteq R^i\}|$ and $\bd^i_X:=|\{e\setminus X: X\subseteq e \mbox{ and } e\setminus X\subseteq R^i\}|$. Then, with probability at least $1-o(1)$, we have
	\begin{itemize}
              \setlength{\itemsep}{0pt}
		\setlength{\parsep}{0pt}
		\setlength{\parskip}{0pt}
            \item [$(i)$]  $Y_{\{v\}}=(1+o(1)) n^{0.2}$ for $v\in V(H)$,
            \item [$(ii)$] $Y_{\{u,v\}}\le 2$ for distinct $u, v \in V(H)$,
            \item [$(iii)$] $Y_e\le 1$ for  $e \in E(H)$,
            \item [$(iv)$] $|R^i| =(1+o(1))n^{0.1}$ for $i=1,\ldots, n^{1.1}$, and
            \item [$(v)$] if  $\mu, \rho'$ are constants with $0<\mu\ll \rho'$, $n/k-\mu n\le m\le n/k$, and
             $\delta_d(H)\ge {n-d\choose
                   k-d}-{n-d-m\choose k-d}-\rho' n^{k-l}$,  then
                for any positive real $\xi\ge 2\rho'$, we have
               $$\bd^{i}_D >  {|R^i|-d\choose k-d}-{|R^i|-d-|R^i|/k \choose k-d}-\xi |R^i|^{k-d}$$
      for all $i= 1, \dots ,n ^{1.1}$ and all $D\in { V(H) \choose d}$,
      \item[$(vi)$] {\color{blue}$|R_i\cap J|\sim cn^{0.1}$ for $i=1,\ldots, n^{1.1}$.}
	\end{itemize}
\end{lemma}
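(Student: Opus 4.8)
The plan is to prove each of the six items by a routine application of concentration inequalities, treating the six assertions essentially independently and then taking a union bound over the $o(1)$-many failure probabilities. Throughout, write $N=v(H)=n$, fix $R$ as described (take each vertex independently with probability $p=n^{-0.9}$, then discard fewer than $k$ vertices to make $|R|$ divisible by $k$), and note that the trimming step changes every quantity below by an additive error that is negligible compared with the stated main terms, so it suffices to control the untrimmed random set $R'$ obtained by the $p$-sampling alone.

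\textbf{Items $(i)$, $(iv)$, $(vi)$: first-moment plus Chernoff.} For a fixed vertex $v$, $Y_{\{v\}}$ is a sum of $n^{1.1}$ independent indicators, each with probability $p=n^{-0.9}$, so $\E Y_{\{v\}}=n^{1.1}\cdot n^{-0.9}=n^{0.2}$; Chernoff gives $|Y_{\{v\}}-n^{0.2}|\le n^{0.2-\delta}$ with probability $1-e^{-\Omega(n^{0.2-2\delta})}$ for suitable small $\delta>0$, and a union bound over the $n$ vertices leaves an $o(1)$ failure probability, proving $(i)$. For $(iv)$, $|R'|\sim \mathrm{Bi}(n,n^{-0.9})$ has mean $n^{0.1}$, so Chernoff and a union bound over the $n^{1.1}$ copies give $|R^i|=(1+o(1))n^{0.1}$ for all $i$ with probability $1-o(1)$. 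Item $(vi)$ is identical: $|R'\cap J|\sim \mathrm{Bi}(cn,n^{-0.9})$ has mean $cn^{0.1}$, so the same argument applies, and the trimming affects $|R^i\cap J|$ by at most $k=O(1)$.

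\textbf{Items $(ii)$, $(iii)$: union bound on rare events.} For distinct $u,v$, the event $\{u,v\}\subseteq R^i$ occurs with probability at most $p^2=n^{-1.8}$, so the expected number of pairs $(i,\{u,v\})$ with $Y_{\{u,v\}}\ge 3$ is at most $\binom{n^{1.1}}{3}\binom{n}{2}p^6 = O(n^{3.3}\cdot n^2\cdot n^{-5.4})=O(n^{-0.1})=o(1)$, so by Markov $(ii)$ holds whp; similarly for $e\in E(H)$, $\Pr[e\subseteq R^i]\le p^k$, and the expected number of pairs $(i,e)$ with $Y_e\ge 2$ is at most $\binom{n^{1.1}}{2}\binom{n}{k}p^{2k}=O(n^{2.2}\cdot n^k\cdot n^{-1.8k})$, which is $o(1)$ whenever $k\ge 3$, giving $(iii)$. (The trimming only decreases these counts.)

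\textbf{Item $(v)$: the main obstacle.} This is the degree-inheritance statement and is where the real work lies. Fix $i$, fix $D\in\binom{V(H)}{d}$ with $D\subseteq R^i$ (the statement is vacuous otherwise), and consider $\bd^i_D$, the number of $(k-d)$-sets $e'\subseteq R^i\setminus D$ with $D\cup e'\in E(H)$. Conditioned on $D\subseteq R'$, $\bd^i_D$ is the number of edges of the link $(k-d)$-graph $L:=\{e' : D\cup e'\in E(H)\}$ that fall entirely inside the random subset $R'\setminus D$, which has $\E[\bd^i_D]=(1+o(1))\,|L|\,p^{k-d}$, and $|L|=\deg_H(D)\ge \binom{n-d}{k-d}-\binom{n-d-m}{k-d}-\rho' n^{k-d}$ by hypothesis. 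The target lower bound, written in terms of $M:=|R^i|\approx n^{0.1}$ and $M/k\approx \E$ of the "local matching number", is $\binom{M-d}{k-d}-\binom{M-d-M/k}{k-d}-\xi M^{k-d}$; the point is that the polynomial $x\mapsto \binom{x-d}{k-d}-\binom{x-d-x/k}{k-d}$ scales like $x^{k-d}$, so after dividing through by $M^{k-d}=(1+o(1))(np)^{k-d}$ the deficit $\rho' n^{k-d}$ becomes $\rho' p^{-(k-d)}\cdot M^{k-d}/n^{k-d}\cdot\ldots$ — one must check that it contributes at most $(\xi-\rho')M^{k-d}$ in the rescaled inequality, using $\xi\ge 2\rho'$ and that lower-order ($o(p^{k-d})$) correction terms from the non-independence and the trimming are absorbable. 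The concentration itself is the delicate part: $\bd^i_D$ is a sum over $(k-d)$-sets of indicators that are \emph{not} independent (they share vertices), so I would apply a bounded-differences / Azuma inequality with respect to the vertex-exposure martingale — changing the in/out status of one vertex changes $\bd^i_D$ by at most $\Delta_1(L)\le \binom{n}{k-d-1}$, but conditioned on the likely event that $|R'|=(1+o(1))n^{0.1}$ one can sharpen this to roughly $M^{k-d-1}$, giving a Lipschitz constant small enough that Azuma over $n$ coordinates yields a failure probability $e^{-\Omega(M)}=e^{-\Omega(n^{0.1})}$. Finally union bound over $i\le n^{1.1}$ and over $D\in\binom{V(H)}{d}=O(n^d)$ choices: $n^{1.1+d}e^{-\Omega(n^{0.1})}=o(1)$. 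I expect the bookkeeping of the rescaled binomial-coefficient inequality (verifying the main term dominates with the claimed slack $\xi$) to be the fiddliest piece, and the martingale Lipschitz estimate conditioned on $|R'|\approx n^{0.1}$ to be the step that needs the most care to state correctly.
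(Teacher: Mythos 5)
Your items $(i)$--$(iv)$ and $(vi)$ are fine and are the standard Chernoff/first-moment computations. Note, for calibration, that the paper itself does not prove this lemma at all: it is quoted from reference [LYY] and is described as the first round of the two-round randomization of Alon, Frankl, Huang, R\"odl, Ruci\'nski and Sudakov [AFH12]; the blue additions ($J$ and item $(vi)$) are exactly the easy Chernoff part. So the only place where your write-up can be measured against an actual argument in this paper is the degree-inheritance statement $(v)$, whose balanced $(1,3)$-partite analogue is proved here as Lemma~\ref{indsub}.

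For $(v)$ there is a genuine gap: the Azuma/vertex-exposure step does not work, even granting your conditioning on $|R'|=(1+o(1))n^{0.1}$. Write $M\approx n^{0.1}$. The deviation you must rule out is $t=\Theta(\rho' M^{k-d})$, while your (sharpened) Lipschitz constant per vertex is about $M^{k-d-1}$ and the martingale has $n$ coordinates, so the Azuma exponent is of order
\[
\frac{t^2}{n\,M^{2(k-d-1)}}\;=\;\Theta\!\left(\frac{M^{2}}{n}\right)\;=\;\Theta\!\left(n^{-0.8}\right),
\]
i.e.\ the resulting tail bound is $\exp(-\Theta(n^{-0.8}))$, which is vacuous --- nowhere near the $e^{-\Omega(n^{0.1})}$ you need to survive the union bound over $n^{1.1+d}$ pairs $(i,D)$. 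The failure is structural, not a matter of making the conditioning rigorous: bounded differences charges all $n$ coordinates at the worst typical effect, whereas only about $M\ll\sqrt{n}$ coordinates are ever relevant. The correct tool, and the one the paper uses in its own analogous Lemma~\ref{indsub} (and which underlies [AFH12]/[LYY]), is Janson's inequality for the lower tail of $\bd^i_D=\sum_{e'\in N_H(D)}\mathbf{1}[e'\subseteq R^i]$: here $\mu=\E\,\bd^i_D=\Theta(M^{k-d})$ and the correlation term satisfies $\Delta\le \sum_{e'\cap f'\ne\emptyset}\P[e'\cup f'\subseteq R']=O\!\left((np)^{2(k-d)-1}\right)=O(M^{2(k-d)-1})$, so Janson gives failure probability $e^{-\Omega(\mu^2/(\mu+\Delta))}=e^{-\Omega(n^{0.1})}$, which the union bound then absorbs. (Kim--Vu polynomial concentration would also do.) Two smaller remarks: the statement of $(v)$ is required for all $D\in\binom{V(H)}{d}$, not only those with $D\subseteq R^i$ --- since $\bd^i_D$ only asks that $e\setminus D\subseteq R^i$, no conditioning on $D\subseteq R^i$ is needed or wanted; and the deterministic comparison of $d_H(D)p^{k-d}$ with $\binom{M-d}{k-d}-\binom{M-d-M/k}{k-d}-\xi M^{k-d}$, using $m\ge n/k-\mu n$, $\mu\ll\rho'$ and $\xi\ge 2\rho'$, which you only gesture at, does need to be carried out, but it is routine once the concentration is done correctly.
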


Since we work with balanced (1,3)-partite 4-graphs, we need to make sure each random subgraph taken is also balanced.
 So we slightky modify the randomization process in the above lemma. We first fix an arbitrary small set $S\subseteq V(H)$.
 Each time we obtain a random copy $R$, we delete some vertices in $R\cap S$ so that the resulting graph
 is balanced. We can do so in a way that, with high probability, all properties in Lemma~\ref{lem1-5} remain (approximately) true.

\begin{lemma}\label{indsub}
	Let $n$ be a sufficiently large positive integer, and let $H$ be a $(1,3)$-partite $4$-graph with partition classes
        $Q,P$ such that $3|Q|=|P|=n$.
	Let $S\subseteq V(H)$ be a set of vertices such that $|S\cap Q|=n^{0.99}/3$ and $|S\cap P|=n^{0.99}$.
	Take $n^{1.1}$ independent copies of $R_+$ and denote them by $R_+^i$, $1\le i\le n^{1.1}$,
        where $R_+$ is chosen from $V(H)$ by taking each vertex
        uniformly at random with probability $n^{-0.9}$. Define
        $R_-^{i}=R_+^i\setminus S$ for $1\le i\le n^{1.1}$.
	
	Then with probability $1-o(1)$, for any sequence $R^i$,
        $1\le i\le n^{1.1}$,  satisfying $R_-^i\subseteq R^i\subseteq R_+^i$, all of the following hold:
	\begin{itemize}
		\setlength{\itemsep}{0pt}
		\setlength{\parsep}{0pt}
		\setlength{\parskip}{0pt}
		\item[ $(i)$] $|R^i|=(4/3+o(1))n^{0.1}$ for all $i=1,\dots, n^{1.1}$.
		\item[ $(ii)$] For each $X\subseteq V(H)$, let
                  $Y_X:=|\{i: X\subseteq R^i\}|$, then,
			\begin{itemize}
				\setlength{\itemsep}{0pt}
				\setlength{\parsep}{0pt}
				\setlength{\parskip}{0pt}
				\item[ $(iia)$] $Y_{\{v\}}\le (1+o(1))n^{0.2}$ for $v\in V(H)$,
				\item[ $(iib)$] $Y_{\{v\}}= (1+o(1))n^{0.2}$ for $v\in V(H)\setminus S$,
				\item[ $(iic)$] $Y_{\{u,v\}}\le 2$ for distinct $u,v\in V(H)$, and
				\item[ $(iid)$] $Y_e\le 1$ for $e\in E(H)$.
			\end{itemize}
		\item[ $(iii)$] For each $X\in {V(H)\choose 2}$, let $\bd_X^i=|N_H(X)\cap \binom{R^i}{2}|$. If $\rho>0$
      is a constant and $d_H(\{u,v\})\geq {n-1\choose 2}-{2n/3\choose
        2}-\rho n^2$ for all $v\in Q$ and $u\in P$, then
       for any constant $\xi\ge 5\rho$, we have
			\[
				\bd_{\{u,v\}}^i>\binom{|R^i\cap P|-1}{2}-\binom{2|R^i\cap P|/3}{2}-\xi |R^i\cap P|^2.
			\]
      for all $i=1,\dots,n^{1.1}$, $v\in Q$, and $u\in P$.
	\end{itemize}
\end{lemma}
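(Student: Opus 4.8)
The plan is to derive this from Lemma~\ref{lem1-5} applied to the 4-graph $H$ with $d=2$, together with a separate handling of the balancing deletions inside $S$. First I would run the randomization of Lemma~\ref{lem1-5}: take $n^{1.1}$ independent copies $R^i_+$ of the random vertex subset $R$, each vertex included with probability $n^{-0.9}$. By Lemma~\ref{lem1-5} (applied with $k=4$, $d=2$, and with $J$ taken to be $S\cap P$, or more carefully with two applications of part $(vi)$ for $S\cap P$ and $S\cap Q$ separately, exploiting $|S\cap P|=n^{0.99}$ and $|S\cap Q|=n^{0.99}/3$), with probability $1-o(1)$ we simultaneously get: $|R^i_+|=(4/3+o(1))n^{0.1}$, $|R^i_+\cap P|=(1+o(1))n^{0.1}$, $|R^i_+\cap Q|=(1/3+o(1))n^{0.1}$, $|R^i_+\cap S|=(1+o(1))n^{0.09}$ with the same asymptotics split across $P$ and $Q$; the counting bounds $Y_{\{v\}}=(1+o(1))n^{0.2}$, $Y_{\{u,v\}}\le 2$, $Y_e\le1$; and the degree lower bound $\bd^i_{\{u,v\}}> \binom{|R^i_+\cap P|-1}{2}-\binom{2|R^i_+\cap P|/3}{2}-\xi'|R^i_+\cap P|^2$ for a slightly smaller slack $\xi'$. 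Here for part $(iii)$ I would really apply Lemma~\ref{lem1-5}$(v)$ to the link 3-graph structure, or directly re-run the Chernoff/Janson estimate from its proof, since the codegree $d_H(\{u,v\})$ lives on pairs of $P$-vertices; the key point is that $R^i_+\cap P$ behaves like the random subset in the uniform-hypergraph lemma.

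Next I would account for the deletions. We define $R^i_-=R^i_+\setminus S$, so $|R^i_+\setminus R^i_-|=|R^i_+\cap S|=(1+o(1))n^{0.09}=o(n^{0.1})$. Any admissible $R^i$ with $R^i_-\subseteq R^i\subseteq R^i_+$ differs from $R^i_+$ in at most $o(n^{0.1})$ vertices, all lying in $S$. Property $(i)$ then follows immediately since $|R^i|=|R^i_+|\pm o(n^{0.1})=(4/3+o(1))n^{0.1}$. For $(ii)$: $Y_{\{v\}}$ for $R^i$ is at most $Y_{\{v\}}$ for $R^i_+$, giving $(iia)$; and if $v\notin S$ then $v\in R^i\iff v\in R^i_+$, so $Y_{\{v\}}$ is unchanged, giving $(iib)$. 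Similarly $Y_{\{u,v\}}$ and $Y_e$ for $R^i$ are at most the corresponding quantities for $R^i_+$, giving $(iic)$ and $(iid)$. For $(iii)$: each of $|R^i\cap P|$ and $|R^i_+\cap P|$ differs by $o(n^{0.1})$, so $\binom{|R^i\cap P|-1}{2}-\binom{2|R^i\cap P|/3}{2}$ differs from the $R^i_+$ version by $o(n^{0.2})=o(|R^i\cap P|^2)$; meanwhile $\bd^i_{\{u,v\}}$ computed in $R^i$ is at least the value in $R^i_-$, which is at least the value in $R^i_+$ minus the number of pairs in $\binom{R^i_+\cap P}{2}$ meeting $S$, i.e.\ minus at most $|R^i_+\cap S\cap P|\cdot|R^i_+\cap P|=o(n^{0.2})$. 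Absorbing all these $o(n^{0.2})$ error terms into the slack (choosing $\xi'$ with $5\rho\le \xi'<\xi$ initially so the final bound holds with slack $\xi$) yields the claimed inequality uniformly over all admissible $R^i$.

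The routine parts are the Chernoff estimates bounding $|R^i_+\cap S|$, $|R^i_+\cap P|$, $|R^i_+\cap Q|$ and their concentration — these are standard and follow the template of Lemma~\ref{lem1-5}, with a union bound over the $n^{1.1}$ copies and the $O(n^2)$ pairs $\{u,v\}$ absorbing the $e^{-\Omega(n^{0.1})}$ failure probabilities. The one point requiring care, which I expect to be the main obstacle, is making the bounds in $(i)$–$(iii)$ hold \emph{for every} admissible $R^i$ with $R^i_-\subseteq R^i\subseteq R^i_+$ simultaneously, rather than for one fixed choice: this is why the argument is organized as ``prove everything for $R^i_+$, then note the perturbation is $o(n^{0.1})$ vertices confined to $S$, and all quantities of interest are monotone or Lipschitz under such perturbations.'' The monotonicity ($Y$-quantities can only decrease, $\bd$ can only decrease by a controlled amount when we remove $S$-vertices) is what lets us pass from the single high-probability event about $R^i_+$ to a statement quantified over all interpolating sets $R^i$.
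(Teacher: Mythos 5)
Your proposal is correct and takes essentially the same route as the paper: Chernoff bounds for the set sizes, monotonicity under deletion of $S$-vertices for the counts $Y_X$, and a Janson-type concentration argument for $\bd^i_{\{u,v\}}$, with the $o(n^{0.2})$ perturbation coming from $S$ absorbed into the slack $\xi$ (the hypotheses of Lemma~\ref{lem1-5}$(v)$ do not literally apply here, so re-running the Janson estimate, as you suggest, is exactly what is needed and what the paper does). The only immaterial difference is that the paper anchors the degree estimate at $R^i_-$, computing $\E\bigl(\sd^i_{\{u,v\}}\bigr)$ via $d_{H-S}(\{u,v\})=(1-o(1))d_H(\{u,v\})$ and then using $R^i\supseteq R^i_-$, whereas you anchor at $R^i_+$ and subtract the at most $|R^i_+\cap S\cap P|\cdot|R^i_+\cap P|=o(n^{0.2})$ pairs meeting $S$.
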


\begin{proof}
	Note that $\E(|R_+^i|)= (4n/3)\cdot n^{-0.9}=4n^{0.1}/3$, and
$$\E(|R_-^i|)= (4n/3-4n^{0.99}/3)\cdot n^{-0.9}=4n^{0.1}/3-4n^{0.09}/3.$$ By Chernoff's inequality,
	\[
		\P(|R_+^i|-4n^{0.1}/3\ge n^{0.095})\le
                e^{-\Omega(n^{0.09})}
        \]
 and
        \[
                \P(|R_-^i|-(4n^{0.1}/3-4n^{0.09}/3)\le -n^{0.095})\le e^{-\Omega(n^{0.09})}.
	\]
In particular,  (i) holds with probability at least $1-e^{-\Omega(n^{0.09})}$.
	
	Let  $Y^+_X:=|\{i: X\subseteq  R_+^i\}|$
	for  $X\subseteq V(H)$. Then $Y^+_X\sim Bi(n^{1.1},
        n^{-0.9|X|})$ and $Y_X\le Y^+_X$ for all $X\subseteq V(H)$, and $Y_X= Y^+_X$ for all $X\subseteq V(H)\setminus S$.
       	Then by Lemma~\ref{lem1-5}, (iic) and (iid) hold with probability $1-o(1)$.
	
       For each $v\in V(H)$, $\E(Y^+_{\{v\}})=n^{0.2}$, thus by Chernoff's inequality,
	\[
		\P\left(\left|Y^+_{\{v\}}-n^{0.2}\right|\ge n^{0.15}\right)\le e^{-\Omega(n^{0.1})}.
	\]
	Thus (iia) and (iib) hold with probability at least $1-e^{-\Omega(n^{0.1})}$.

	Let $\sd_X^i=\left|N_H(X)\cap \binom{R_-^i}{2}\right|$. To
        prove $(iii)$,  since $n$
        is sufficiently large, it suffices to show that for all $v\in Q$ and $u\in P$,
	\[
		\sd_{\{u,v\}}^i>\binom{n^{0.1}-1}{2}-\binom{2n^{0.1}/3}{2}-\xi n^{0.2}/2.
	\]
	Conditioning on $|R_+^i|<4n^{0.1}/3 - n^{0.095}$ and $|R_-^i|
        > (4n^{0.1}/3-4n^{0.01}/3) -n^{0.095}$ for all $i$, we have,
        for all $v\in Q$ and $u\in P$,
	\begin{align*}
		\E(\sd_{\{u,v\}}^i) & = d_{H-S}(\{u,v\})(n^{-0.9})^2 \\
			& \ge (1-o(1)) \left({n-1\choose 2}-{2n/3\choose 2}-\rho n^2\right) (n^{-0.9})^2 \\
			& \ge \binom{n^{0.1}-1}{2}-\binom{2n^{0.1}/3}{2}-2\rho n^{0.2},
	\end{align*}
	where the first inequality holds because $|S|=4n^{0.99}/3$ (and,
        hence, $d_{H-S}(\{u,v\})=(1-o(1))d_H(\{u,v\})$). In particular, $\E\left(\sd_{\{u,v\}}^i\right)=\Omega(n^{0.2})$.
	Next, we apply Janson's Inequality (Theorem 8.7.2 in \cite{AS}) to bound the deviation of $\sd^i_{\{u,v\}}$.
         Write $\sd^i_{\{u,v\}}=\sum_{e\in N_H({\{u,v\}})} X_e$,
         where $X_e=1$ if $e\subseteq R_-^i$ and $X_e=0$ otherwise. Then
         \[\Delta := \sum_{e\cap f \ne \emptyset} \P(X_{e}=X_{f}=1)\le \binom{n-1}{2}\binom{2}{1}\binom{n-3}{1}(n^{-0.9})^3
         \]
         and, thus, $ \Delta = O(n^{0.3})$.
         By Janson's inequality, for any constant $\gamma>0$,
         \[\P\left(\sd^i_{\{u,v\}}\le(1-\gamma)\E(\sd^i_{\{u,v\}})\right)\le e^{-\gamma^2 \E(\sd^i_{\{u,v\}})/(2+\Delta/\E(\sd^i_{\{u,v\}}))}=e^{-\Omega(n^{0.1})}.
         \]
         Since $\xi\ge 5\rho$ (and taking $\gamma$ sufficiently small), the union bound implies that, with probability
         at least $1-n^{2+1.1}e^{-\Omega(n^{0.1})}$, for all $v\in Q$
         and $u\in P$ and for all $i\in [n^{1.1}]$,
	\[
		\sd_{\{u,v\}}^i>\binom{n^{0.1}-1}{2}-\binom{2n^{0.1}/3}{2}-\xi n^{0.2}/2.
	\]

         Thus,  $(iii)$ holds with probability at least
	\[
         	(1-n^{1.1}e^{-\Omega(n^{0.09})}) (1- n^{2+1.1}e^{-\Omega(n^{0.1})})>1- n^{4}e^{-\Omega(n^{0.09})}.
	\]

         Hence, it follows from union bound that, with probability at
         least $1-o(1)$, (i)-(iii) hold for any sequence $R^i$, $1\le
         i\le n^{1.1}$, satisfying $R_-^i\subseteq R^i\subseteq R_+^i$.
\end{proof}

In order to apply Lemma~\ref{pfrac}, we need an  additional requirement that
 the induced subgraphs $R_i$  be balanced.

\begin{lemma}\label{balanced}
	Let  $n,H,P,Q,S$ and $R_+^i, R_-^i$, $i\in [n^{1.1}]$,  be given as in Lemma~\ref{indsub}.
	Then, with probability $1-o(1)$, there exist subgraphs $R_i$,
        $i\in [n^{1.1}]$, such that
$R^i_-\subseteq R^i\subseteq R^i_+$ and $R^i$ is balanced.
\end{lemma}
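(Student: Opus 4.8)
The plan is to show that the discrepancy $3|R^i_-\cap Q|-|R^i_-\cap P|$ is, with high probability, much smaller in absolute value than the number of vertices of $S$ that survive in $R^i_+$, and then to repair each $R^i_-$ individually by adding back to it an appropriate number of such vertices.

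First I would fix $i\in[n^{1.1}]$, write $a_i=|R^i_-\cap Q|$ and $b_i=|R^i_-\cap P|$, and call $t_i:=3a_i-b_i$ the \emph{imbalance} of $R^i_-$. Since $R^i_-=R^i_+\setminus S$ and each vertex of $H$ lies in $R^i_+$ independently with probability $n^{-0.9}$, we have $a_i\sim Bi(|Q\setminus S|,n^{-0.9})$ and $b_i\sim Bi(|P\setminus S|,n^{-0.9})$ independently, with $|Q\setminus S|=n/3-n^{0.99}/3$ and $|P\setminus S|=n-n^{0.99}$. The key point, and precisely the reason for taking $|S\cap Q|=n^{0.99}/3$ and $|S\cap P|=n^{0.99}$, is that $3\,\E(a_i)=(n-n^{0.99})n^{-0.9}=\E(b_i)$, so $\E(t_i)=0$. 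Since $a_i$ and $b_i$ are sums of independent indicators with means $\Theta(n^{0.1})$, Chernoff's inequality gives $|t_i|\le n^{0.06}$ with probability $1-e^{-\Omega(n^{0.01})}$. Likewise $|R^i_+\cap S\cap Q|\sim Bi(n^{0.99}/3,n^{-0.9})$ has mean $n^{0.09}/3$ and $|R^i_+\cap S\cap P|\sim Bi(n^{0.99},n^{-0.9})$ has mean $n^{0.09}$, so each is at least $n^{0.09}/6$ with probability $1-e^{-\Omega(n^{0.09})}$. A union bound over $i\in[n^{1.1}]$ then shows that, with probability $1-o(1)$, all of these bounds hold for every $i$; I would condition on this event.

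Next, for a fixed $i$ I would produce $R^i$ with $R^i_-\subseteq R^i\subseteq R^i_+$ and $3|R^i\cap Q|=|R^i\cap P|$ as follows. The vertices that may be added to $R^i_-$ are exactly those of $(R^i_+\cap S\cap Q)\cup(R^i_+\cap S\cap P)$, and adding $x$ of the former and $y$ of the latter changes the imbalance from $t_i$ to $t_i+3x-y$; so it suffices to solve $t_i+3x-y=0$ subject to $0\le x\le|R^i_+\cap S\cap Q|$ and $0\le y\le|R^i_+\cap S\cap P|$. If $t_i\ge0$, take $x=0$ and $y=t_i$, which is valid since $t_i\le n^{0.06}<n^{0.09}/6\le|R^i_+\cap S\cap P|$. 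If $t_i<0$, write $|t_i|=3q+r$ with $r\in\{0,1,2\}$ and take $(x,y)=(q,0)$ when $r=0$ and $(x,y)=(q+1,3-r)$ when $r\in\{1,2\}$; in either case $x\le|t_i|/3+1<n^{0.09}/6\le|R^i_+\cap S\cap Q|$ and $y\le2<n^{0.09}/6\le|R^i_+\cap S\cap P|$, and one checks that $t_i+3x-y=0$. Carrying this out for every $i$ yields the required balanced subgraphs $R^i$.

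The argument is routine and I do not expect a real obstacle; the one substantive observation is that $\E(t_i)=0$ because of the exact sizes imposed on $S$, so the typical imbalance $O(n^{0.06})$ is negligible next to the $\Theta(n^{0.09})$ amount of slack that $S$ contributes to $R^i_+$. (I would also note, for use later, that combining Lemma~\ref{balanced} with Lemma~\ref{indsub} yields a balanced sequence $R^i$ that simultaneously satisfies properties (i)--(iii) of Lemma~\ref{indsub}.)
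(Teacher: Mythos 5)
Your proposal is correct and takes essentially the same approach as the paper: Chernoff bounds plus a union bound over $i\in[n^{1.1}]$ show the class imbalance is far smaller than the $\Theta(n^{0.09})$ many vertices of $S$ in each class of $R^i_+$, and that slack is then used to repair the balance of each $R^i$. The only cosmetic difference is that you add $S$-vertices back to $R^i_-$ (using that $\mathbb{E}(t_i)=0$ by the choice of $|S\cap Q|$ and $|S\cap P|$), whereas the paper bounds the imbalance of $R^i_+$ by $4n^{0.08}$ and removes $S$-vertices; the underlying argument is the same.
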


\begin{proof}
	Recall that $|P|=n$, $|Q|=n/3$, $|S\cap P|=n^{0.99}$, and $|S\cap Q|=n^{0.99}/3$, and that
$R_+^i$ is formed by taking each vertex of $H$ independently and
uniformly at random  with probability $n^{-0.9}$.
So for $i\in [n^{1.1}]$,
	\begin{align*}
		& \E(|R^i_+\cap P|)=n^{0.1}, \\
        & \E(|R^i_+\cap P\cap S|)=n^{0.09},  \\
		& \E(|R^i_+\cap Q|)=n^{0.1}/3, \quad \text{and} \\
        & \E(|R^i_+\cap P\cap S|)=n^{0.09}/3.
	\end{align*}
	By Chernoff's inequality,
	\begin{align*}
		& \P\left(\large||R^i_+\cap P|-n^{0.1}\large|\ge n^{0.08}\right)\le e^{-\Omega(n^{0.06})}, \quad \\
		& \P\left(\large||R^i_+\cap P\cap S|-n^{0.09}\large|\ge n^{0.08}\right)\le e^{-\Omega(n^{0.07})}, \quad \\
		& \P\left(\large||R^i_+\cap Q|-n^{0.1}/3\large|\ge n^{0.08}\right)\le e^{-\Omega(n^{0.06})}, \quad \text{and} \\
		& \P\left(\large||R^i_+\cap Q\cap S|-n^{0.09}/3\large|\ge n^{0.08}\right)\le e^{-\Omega(n^{0.07})}.
	\end{align*}

	Thus, with probability $1-o(1)$, for all $i\in [n^{1.1}]$,
	\begin{align*}
		& |R^i_+\cap P|\in [n^{0.1}-n^{0.08},n^{0.1}+n^{0.08}],\\
        & |R^i_+\cap P\cap S|=(1+o(1))n^{0.09},  \\
		& |R^i_+\cap Q|\in [n^{0.1}/3-n^{0.08},n^{0.1}/3+n^{0.08}], \quad \text{and} \\
        & |R^i_+\cap Q\cap S|=(1+o(1)) n^{0.09}.
	\end{align*}
	     Therefore, 
\[
            \left||R^i_+\cap P|-3|R^i_+\cap Q|\right|\le
                4n^{0.08}<\min\{|R^i_+\cap P\cap S|, |R^i_+\cap Q\cap S|\}.
\]
	Hence, with probability $1-o(1)$, $R^i$ can be taken to be balanced for all $i\in [n^{1.1}]$.
\end{proof}

 Another smaller difference between here and \cite{AFH12} is that condition (ii) in Lemma~\ref{indsub}
 is slightly weaker than the corresponding condition in \cite{AFH12}. In
 \cite{AFH12} all vertices have almost the same degree, but here a small portion of the vertices could have smaller degree.
 The following lemma reflects a slightly weaker conclusion due to this difference, and the proof mainly follows that of Claim 4.1 in \cite{AFH12}.


\begin{lemma}\label{secrd}
	Let  $n$, $H,S$, $R^i$, $i=1,\dots, n^{1.1}$ be given as in
        Lemma~\ref{balanced} such that each $H[R^i]$ is a balanced
        $(1,3)$-partite $4$-graph and has a perfect fractional  matching $w^i$.
	Then there exists a spanning subgraph $H''$ of $H':=\bigcup_{i=1}^{n^{1.1}}H[R^i]$ such that
         \begin{itemize}
            \item [$(i)$] $d_{H''}(u)\le (1+o(1))n^{0.2}$ for $u\in S$,
            \item [$(ii)$] $d_{H''}(v) =(1+o(1))n^{0.2}$ for $v\in V(H)\setminus S$, and
            \item [$(iii)$] $\Delta_2(H'')\le n^{0.1}$.
         \end{itemize}
\end{lemma}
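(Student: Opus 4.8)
The plan is to run a ``second round'' of randomization on the already-fixed hypergraphs $H[R^i]$, in the spirit of \cite{AFH12} (this is essentially Claim~4.1 there). First note that, by Lemma~\ref{indsub}(iid), $Y_e\le 1$ for every $e\in E(H)$, so the edge sets $E(H[R^1]),\dots,E(H[R^{n^{1.1}}])$ are pairwise disjoint; hence each edge of $H'=\bigcup_{i=1}^{n^{1.1}}H[R^i]$ lies in exactly one $H[R^i]$. For every $i\in[n^{1.1}]$ and every $e\in E(H[R^i])$, select $e$ independently with probability $w^i(e)$ (this is legitimate because a fractional matching satisfies $w^i(e)\in[0,1]$), and let $H''$ be the spanning subgraph of $H'$ whose edge set consists of all the selected edges. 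Then $H''$ is automatically $(1,3)$-partite with classes $Q,P$.

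Next I would compute the two relevant expectations. Fix $v\in V(H)$. Since $w^i$ is a \emph{perfect} fractional matching of $H[R^i]$, every vertex constraint of $H[R^i]$ is tight, so $\sum_{e\in E(H[R^i]):\,v\in e}w^i(e)=1$ if $v\in R^i$ and $=0$ otherwise; hence $\E(d_{H''}(v))=\sum_{i:\,v\in R^i}1=Y_{\{v\}}$. For distinct $u,v\in V(H)$ and each $i$ with $\{u,v\}\subseteq R^i$, the inclusion $\{e:\{u,v\}\subseteq e\}\subseteq\{e:u\in e\}$ and $w^i\ge 0$ give $\sum_{e\supseteq\{u,v\}}w^i(e)\le\sum_{e\ni u}w^i(e)=1$, so by Lemma~\ref{indsub}(iic), $\E(d_{H''}(\{u,v\}))\le Y_{\{u,v\}}\le 2$. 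It is worth stressing that the fractional-matching constraint is exactly what pulls the expected codegree down to $O(1)$: a crude count only yields $d_{H'}(\{u,v\})=O(n^{0.2})$, far too large for (iii), so one really must pass to the sparsified $H''$.

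Finally I would invoke concentration. Each $d_{H''}(v)$ is a sum of mutually independent $0/1$ variables (one per pair $(i,e)$ with $v\in e\in E(H[R^i])$), and $\E(d_{H''}(v))=Y_{\{v\}}=O(n^{0.2})$ by Lemma~\ref{indsub}(iia); Chernoff's inequality gives $\P(|d_{H''}(v)-Y_{\{v\}}|\ge n^{0.15})\le e^{-\Omega(n^{0.1})}$, and a union bound over the $4n/3$ vertices shows that, with probability $1-o(1)$, $|d_{H''}(v)-Y_{\{v\}}|\le n^{0.15}$ for every $v$ (reading the lower inequality as vacuous when $Y_{\{v\}}<n^{0.15}$). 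Combined with Lemma~\ref{indsub}(iib) this yields $d_{H''}(v)=(1+o(1))n^{0.2}$ for $v\in V(H)\setminus S$, i.e.\ (ii), and combined with Lemma~\ref{indsub}(iia) it yields $d_{H''}(u)\le(1+o(1))n^{0.2}$ for $u\in S$, i.e.\ (i). Likewise each $d_{H''}(\{u,v\})$ is a sum of independent $0/1$ variables with mean at most $2$, so $\P(d_{H''}(\{u,v\})\ge n^{0.1})\le (2e/n^{0.1})^{n^{0.1}}=e^{-\Omega(n^{0.1}\log n)}$, and a union bound over the $O(n^2)$ pairs gives (iii) with probability $1-o(1)$. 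Intersecting these finitely many $(1-o(1))$-events, some $H''$ with properties (i)--(iii) exists. The only point needing a little care is the codegree bound (iii): its target $n^{0.1}$ sits a whole factor $n^{-0.1}$ below the vertex degrees, but since $\E(d_{H''}(\{u,v\}))=O(1)$ even the tame upper-tail estimate for a sum of independent indicators beats the $n^{-2}$ threshold the union bound requires; everything else is routine.
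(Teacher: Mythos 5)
Your proposal is correct and follows essentially the same route as the paper: use Lemma~\ref{indsub}(iid) to see each edge lies in at most one $H[R^i]$, sparsify by keeping each edge $e$ independently with probability $w^{i_e}(e)$, compute $\E(d_{H''}(v))=Y_{\{v\}}$ and $\E(d_{H''}(\{u,v\}))\le Y_{\{u,v\}}\le 2$ from the perfect fractional matchings and (iia)--(iic), and finish with Chernoff-type bounds plus union bounds. The only cosmetic difference is your explicit justification that the vertex constraints of a perfect fractional matching are tight and your sharper-looking upper-tail estimate for the codegrees, neither of which changes the argument.
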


\begin{proof}
	Let $H'=\bigcup_{i=1}^{n^{1.1}} H[R^i]$.
                             By $(iid)$ of Lemma~\ref{indsub}, each edge of $H$ is contained in at
 most one $R^i$. Let $i_e$ denote the index $i$ such that $e\subseteq
 R^i$ (if exists); and let $w^{i_e}(e)=0$ when $i_e$ is not defined.
 	Let $H''$ be a spanning subgraph of $H'$ obtained by
        independently selecting each edge $e$ at random
    with probability $w^{i_e}(e)$.

	For $v\in V(H'')$, let $I_v=\{i:\ v\in R^i\}$, $E_v=\{e\in H':\ v\in e\}$, and $E_v^i=E_v\cap H[R^i]$.
    Then $E_v^i$, $ i\in I_v$, form a partition of $E_v$. Hence, for $v\in V(H'')$,
	\[
		d_{H''}(v)= \sum _{e\in E_v} 1 =\sum_{i\in I_v}\sum_{e\in E_v^i} X_e,
	\]
	where $X_e\sim Be(w^{i_e}(e))$ is the Bernoulli random variable with $X_e=1$ if $e\in E(H'')$ and $X_e=0$ otherwise.  Thus,
    since $\sum_{e\in E_v^i} w^{i}(e)=1$ (as $w^i$ is a perfect fractional  matching in $H[R^i]$),
	\[
		\E (d_{H''}(v))=\sum_{i\in I_v}\sum_{e\in E_v^i} w^{i}(e)=\sum_{i\in I_v} 1.
	\]
	Hence, $\E (d_{H''}(v))=(1+o(1))n^{0.2}$ for $v\in
        V(H)\setminus S$ (by $(iib)$ of Lemma~\ref{indsub}),
    and  $\E (d_{H''}(v))\le (1+o(1))n^{0.2}$ for $v\in S$ (by $(iia)$ of Lemma~\ref{indsub}).
	Now by Chernoff's inequality, for $v\in V(H)\setminus S$,
	\[
		\P(|d_{H''}(v)-n^{0.2}|\ge n^{0.15})\le e^{-\Omega(n^{0.1})},
	\]
	and for $v\in S$,
	\[
		\P(d_{H''}(v)-n^{0.2}\ge n^{0.15})\le e^{-\Omega(n^{0.1})}.
	\]
	
    Thus by taking union bound over all $v\in V(H)$, we have that, with probability $1-o(1)$, $d_{H''}(v)=(1+o(1))n^{0.2}$ for all $v\in V(H)\setminus S$ and $d_{H''}(v)\le(1+o(1))n^{0.2}$ for all $v\in S$.
	
	Next,  note that for distinct $u,v\in V(H)$,
	\[
		d_{H''}(\{u,v\})= \sum _{e\in E_u\cap E_v\cap E(H'')} 1=\sum_{i\in I_u\cap I_v}\sum_{e\in E_u^i\cap E_v^i} X_e
	\]
	and $\E (d_{H''}(\{u,v\}))= \sum_{i\in I_u\cap I_v}\sum_{e\in E_u^i\cap E_v^i} w^i(e)$.
    By (iic) in Lemma~\ref{indsub}, $|I_u\cap I_v|\le 2$. So $\E (d_{H''}(\{u,v\})) \le |I_u\cap I_v|\le 2$.
	Thus by Chernoff's inequality,
	\[
		\P(d_{H''}(\{u,v\})\ge n^{0.1})\le e^{-\Omega(n^{0.2})}.
	\]
	Hence by a union bound $\Delta_2(H'')\le n^{0.1}$ with probability $1-o(1)$.

    Therefore, with probability $1-o(1)$, $H''$ satisfies $(i)$,
    $(ii)$, and $(iii)$.
\end{proof}

We also need the following result attributed to Pippenger \cite{PS}, stated as Theorem 4.7.1 in \cite{AS}. A {\it cover} in a hypergraph $H$
is a set of edges whose union is $V(H)$.

\begin{lemma}[Pippenger and Spencer, 1989] \label{nibble}
	For every integer $k\ge 2$ and reals $r\ge 1$ and $a>0$, there are $\gamma=\gamma(k,r,a)>0$ and $d_0=d_0(k,r,a)$ such that for every $n$ and $D\ge d_0$ the following holds: Every $k$-uniform hypergraph $H=(V,E)$ on a set $V$ of $n$ vertices in which all vertices have positive degrees and which satisfies the following conditions:
	\begin{itemize}
	 	\setlength{\itemsep}{0pt}
		\setlength{\parsep}{0pt}
		\setlength{\parskip}{0pt}
		\item[$($1$)$] For all vertices $x\in V$ but at most $\gamma n$ of them, $d(x)=(1\pm \gamma)D$;
		\item[$($2$)$] For all $x\in V$, $d(x)<rD$;
		\item[$($3$)$] For any two distinct $x,y\in V$, $d(x,y)<\gamma D$;
	\end{itemize}
	contains a cover of at most $(1+a)(n/k)$ edges.
\end{lemma}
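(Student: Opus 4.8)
This is the classical R\"odl ``nibble'' (semi-random) theorem, run to produce a near-optimal \emph{cover} rather than a near-perfect matching, so the plan is to prove it by the nibble. (Alternatively one may quote Pippenger's near-perfect matching form of the same statement: under the same hypotheses $H$ has a matching leaving at most $\eta n$ vertices uncovered once $D\ge d_0(\eta,k,r)$; since every vertex of $H$ has positive degree, padding such a matching with one edge per uncovered vertex yields a cover of size at most $n/k+\eta n\le (1+a)(n/k)$ for $\eta\le a/k$. Below I sketch the direct argument.) Fix a small $\varepsilon=\varepsilon(a,k)>0$ and an integer $R=R(\varepsilon,a,k)$ with $e^{-\varepsilon R}<a/(4k)$. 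The plan is to build the cover in $R$ rounds, maintaining at the start of round $i$ a residual hypergraph $H_i=H[V_i]$ on the uncovered vertex set $V_i$ (with $H_0=H$, $V_0=V$), together with the invariant that $|V_i|=n_i=(1\pm o(1))e^{-\varepsilon i}n$, that all but at most $\beta_i n$ vertices $x$ of $H_i$ satisfy $d_{H_i}(x)=(1\pm\delta_i)D_i$ with $D_i=(1\pm o(1))e^{-(k-1)\varepsilon i}D$, and that $d_{H_i}(x,y)<\gamma_i D_i$ for all $x\ne y$, where $\beta_i,\delta_i,\gamma_i$ are small and grow only slowly in $i$ (the bound $d_{H_i}(x)<rD$ of condition (2) is inherited automatically since $H_i$ is induced).

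\textbf{The nibble step.} Given $H_i$, select each edge of $H_i$ independently with probability $p_i:=\varepsilon/D_i$; let $M_i$ be the set of selected edges, add $M_i$ to the cover, put $V_{i+1}:=V_i\setminus\bigcup_{e\in M_i}e$ and $H_{i+1}:=H[V_{i+1}]$. The first moments are the usual ones: $\mathbb E\,|M_i|=p_i\,e(H_i)=(1\pm o(1))\varepsilon n_i/k$; for a fixed good $x\in V_i$, $\Pr[x\in V_{i+1}]=(1-p_i)^{d_{H_i}(x)}=(1\pm o(1))e^{-\varepsilon}$, so $\mathbb E\,n_{i+1}=(1\pm o(1))e^{-\varepsilon}n_i$; and, conditioned on $x\in V_{i+1}$, an edge $e\ni x$ survives into $H_{i+1}$ precisely when none of the other $k-1$ vertices of $e$ is covered, an event of probability $(1\pm o(1))e^{-(k-1)\varepsilon}$ --- here the codegree bound $d_{H_i}(x,y)<\gamma_i D_i$ of condition (3) makes these covering events nearly independent --- whence $\mathbb E\bigl[d_{H_{i+1}}(x)\mid x\in V_{i+1}\bigr]=(1\pm o(1))e^{-(k-1)\varepsilon}D_i\approx D_{i+1}$. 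The new codegrees have conditional expectation $O(e^{-k\varepsilon}\gamma_i D_i)=o(D_{i+1})$.

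\textbf{Concentration and propagation of the invariant.} The core of the proof is to show that, with high probability, $|M_i|$ and $n_{i+1}$ are within a $(1+o(1))$ factor of the above expectations and that all but at most $\beta_{i+1}n$ vertices $x\in V_{i+1}$ have $d_{H_{i+1}}(x)=(1\pm\delta_{i+1})D_{i+1}$, where $\beta_{i+1},\delta_{i+1}$ are only slightly larger than $\beta_i,\delta_i$. Writing $d_{H_{i+1}}(x)=\sum_{e\ni x}\mathbf 1[e\text{ survives}]$, the codegree bound forces $\mathrm{Var}(d_{H_{i+1}}(x))=O(D_i)+O(\gamma_i D_i^2)$, so by Chebyshev's inequality $\Pr\bigl[\,|d_{H_{i+1}}(x)-D_{i+1}|>\delta_{i+1}D_{i+1}\,\bigr]=O\bigl(\gamma_i/\delta_{i+1}^2+1/(\delta_{i+1}^2 D_i)\bigr)$, which is small once $\gamma$ is chosen small (the theorem lets us fix $\gamma=\gamma(k,r,a)$) and $D\ge d_0$; taking expectations and applying Markov's inequality bounds the number of vertices that ``go bad'' in round $i+1$, which are then frozen and set aside for the end. (When $D$ is polynomially large in $n$, as in the applications in this paper, Talagrand's inequality gives the stronger conclusion that \emph{all} degrees are controlled, but this is not needed.) This is the step I expect to be the main obstacle: propagating the quantitative invariant --- in particular controlling the accumulation of the ``bad'' vertex sets over all $R$ rounds while keeping $\beta_i,\delta_i,\gamma_i$ small --- is exactly where conditions (1), (2) and (3) are used.

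\textbf{Termination.} Choose the parameters so that the frozen ``bad'' vertices produced over all $R$ rounds number at most $(a/(4k))n$ in total; then, since $n_R=(1\pm o(1))e^{-\varepsilon R}n<(a/(4k))n$, the set $S$ of vertices never covered satisfies $|S|\le (a/(2k))n$. As every vertex of $H$ has positive degree, choose one edge of $H$ through each vertex of $S$ and add these to the cover; this adds at most $|S|$ edges. The total cover size is therefore at most $\sum_{i=0}^{R-1}|M_i|+|S|\le (1+o(1))\tfrac nk\sum_{i=0}^{R-1}\varepsilon e^{-\varepsilon i}+\tfrac a{2k}n\le (1+o(1))\tfrac nk+\tfrac a{2k}n<(1+a)\tfrac nk$, provided $\varepsilon$ is small, $R$ large, and $d_0=d_0(k,r,a)$ large enough that all the round estimates hold; this is why $d_0$ must depend on $a$ (through $\varepsilon$ and $R$) as well as on $k$ and $r$.
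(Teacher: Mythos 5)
The paper does not prove this lemma at all: it is quoted as a known result of Pippenger and Spencer \cite{PS}, stated as Theorem 4.7.1 in \cite{AS}, so there is no internal argument to compare your attempt against. Your sketch is precisely the standard semi-random (nibble) proof of that theorem --- iterate small random bites with edge-selection probability $\varepsilon/D_i$, use the codegree hypothesis (3) for near-independence and a second-moment bound on the new degrees, freeze the few vertices whose degrees drift out of range, and at the end cover the $O(an/k)$ leftover vertices one edge each using the positive-degree hypothesis --- and your alternative remark (quote the near-perfect-matching form and pad with one edge per uncovered vertex) is an equally legitimate route. The only caveat is that the heart of the matter, propagating the regularity invariant with error parameters $\beta_i,\delta_i,\delta_{i+1},\gamma_i$ kept under control across all $R$ rounds, is outlined rather than carried out; that bookkeeping is exactly the technical content of Pippenger--Spencer, so what you have is a correct and well-aimed plan rather than a complete proof, which is acceptable here since the paper itself treats the lemma as a black-box citation.
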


Note that $H$ contains a cover of at most $(1+a)(n/k)$ edges implies
that $H$ contains a matching of size at least $(1-(k-1)a)(n/k)$ (see, for example, \cite{PS}). Now we are ready to state and prove the main result of this section, which will
be used to find an almost perfect matching after deleting an absorber.

\begin{lemma}\label{almPM}
	Let $\sigma>0$ and $0<\rho\le \varepsilon/4\ll 1$, let $n$ be
        a sufficiently large positive integer, and
	let $H$ be a $(1,3)$-partite $4$-graph with partition classes
        $Q,P$ such that $3|Q|=|P|=n$.
	Suppose  $H$ is not $\varepsilon$-close to any $H_{1,3}(n,n/3)$ and
     $d_H(\{u,v\})\geq {n-1\choose 2}-{2n/3\choose 2}-\rho n^2$ for
     all $v\in Q$ and $u\in P$.
	Then $H$ contains a matching covering all but at most $\sigma n$ vertices.
\end{lemma}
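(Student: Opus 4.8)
The plan is to combine the random-subgraph machinery from this section with the perfect fractional matching result, Lemma~\ref{pfrac}, and then the R\"odl nibble result, Lemma~\ref{nibble}, essentially following the strategy of \cite{AFH12}. First I would fix a small set $S\subseteq V(H)$ with $|S\cap P|=n^{0.99}$ and $|S\cap Q|=n^{0.99}/3$ (its only role is to make balanced random subgraphs available). Apply Lemma~\ref{indsub} and Lemma~\ref{balanced} to obtain $n^{1.1}$ random vertex subsets $R_+^i$, and choose balanced $R^i$ with $R_-^i\subseteq R^i\subseteq R_+^i$, so that with probability $1-o(1)$ properties $(i)$--$(iii)$ of Lemma~\ref{indsub} hold for all these $R^i$ simultaneously. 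In particular each $H[R^i]$ is a balanced $(1,3)$-partite $4$-graph whose pair degrees satisfy $d_{H[R^i]}(\{u,v\})>\binom{|R^i\cap P|-1}{2}-\binom{2|R^i\cap P|/3}{2}-\xi|R^i\cap P|^2$ for a suitably small $\xi$ (say $\xi=5\rho$).

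Next I would verify that each $H[R^i]$ satisfies the hypotheses of Lemma~\ref{pfrac}. The pair-degree bound above is exactly the required lower bound with the r\^ole of $n$ played by $|R^i\cap P|=(1+o(1))n^{0.1}$ and with $\rho$ replaced by a constant $<\varepsilon^{12}$ (possible since $\rho\le\varepsilon/4$ and we may enlarge $\varepsilon$ or shrink $\rho$; one picks $\xi$ accordingly). The remaining hypothesis is the absence of a large independent set: $H[R^i]$ should contain no independent set $J$ with $|J\cap Q|\ge |R^i\cap P|/3-\varepsilon^2|R^i\cap P|$ and $|J\cap P|\ge 2|R^i\cap P|/3-\varepsilon^2|R^i\cap P|$. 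This is where I would invoke the non-extremality of $H$: since $H$ is not $\varepsilon$-close to any $H_{1,3}(n,n/3)$, Lemma~\ref{dense} says $H$ is $(\F,\varepsilon/6)$-dense for the family $\F$ of sets meeting $P$ in at least $(2/3-\varepsilon/8)n$ vertices and $Q$ in at least $(1/3-\varepsilon/8)n$ vertices, and in particular $e(H)\ge cn^4$ for an absolute constant $c$. Then Lemma~\ref{indep}, applied with $\alpha_1=2/3-\varepsilon/8$, $\alpha_2=1/3-\varepsilon/8$ and a tiny $\gamma$, shows that with probability $1-n^{O(1)}e^{-\Omega(n^{0.1})}$ every independent set of $H[R]$ is small in $P$ or small in $Q$ in the scaled sense; applying this to each $R^i$ and taking a union bound over $i\le n^{1.1}$ still leaves probability $1-o(1)$. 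Hence each $H[R^i]$ has a perfect fractional matching $w^i$ by Lemma~\ref{pfrac}.

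Now Lemma~\ref{secrd} produces a spanning subgraph $H''$ of $H'=\bigcup_i H[R^i]$ with $d_{H''}(v)=(1+o(1))n^{0.2}$ for $v\notin S$, $d_{H''}(u)\le(1+o(1))n^{0.2}$ for $u\in S$, and $\Delta_2(H'')\le n^{0.1}$. Every vertex has positive degree except possibly the $|S|=O(n^{0.99})=o(n)$ vertices of $S$; delete from $H''$ those isolated vertices of $S$ (there are at most $|S|=o(\gamma n)$ of them) to get $\tilde H$ on $V(H)\setminus S_0$ for some $S_0\subseteq S$. Then $\tilde H$ is a $4$-uniform hypergraph on $(1+o(1))(4n/3)$ vertices, all of positive degree, with $d(x)=(1\pm\gamma)D$ for $D:=n^{0.2}$ for all but at most $|S|\le\gamma|V(\tilde H)|$ vertices, with $d(x)<rD$ for $r=2$ say, and with $d(x,y)\le n^{0.1}<\gamma D$. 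So Lemma~\ref{nibble} (with $k=4$, $r=2$, and $a$ chosen so that $3a<\sigma/2$) gives a matching $M$ in $\tilde H\subseteq H$ of size at least $(1-3a)(|V(\tilde H)|/4)\ge (1-3a)(n/3) - O(n^{0.99})$, which leaves uncovered at most $3a\cdot\tfrac{4n}{3}+|S|+o(n)<\sigma n$ vertices of $H$. That is the desired matching.

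The main obstacle is the middle step: ensuring the scaled non-extremality condition passes to \emph{every} random subgraph $H[R^i]$ simultaneously. This needs Lemma~\ref{indep} to be applied with parameters matching those of Lemma~\ref{pfrac} after the $n\mapsto n^{0.1}$ rescaling, and one has to be careful that the additive error terms ($\gamma n^{0.1}$, $C/n$ type terms, and the $o(1)$ fluctuations of $|R^i\cap P|$) are all comfortably smaller than the $\varepsilon^2 n^{0.1}$ slack demanded by Lemma~\ref{pfrac}; since $\gamma\ll\varepsilon^2$ can be chosen in advance this works, but it is the delicate bookkeeping. Everything else is a matter of checking hypotheses and assembling the lemmas, with the two union bounds over $i\le n^{1.1}$ each costing only a polynomial factor against exponentially small failure probabilities.
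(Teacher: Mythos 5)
Your proposal is correct and follows essentially the same route as the paper's proof: Lemmas~\ref{indsub} and~\ref{balanced} for the balanced random subgraphs, Lemmas~\ref{dense} and~\ref{indep} to bound independent sets so that Lemma~\ref{pfrac} yields perfect fractional matchings in every $H[R^i]$, then Lemma~\ref{secrd} and the Pippenger--Spencer result (Lemma~\ref{nibble}) with $D=n^{0.2}$ to extract the almost perfect matching. Your extra bookkeeping (handling possibly isolated vertices of $S$ and checking the rescaled slack $\varepsilon^2 n^{0.1}$ against the $\gamma$, $C/n$ and $o(1)$ errors) only makes explicit what the paper leaves implicit.
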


\begin{proof}
     By Lemmas~\ref{indsub} and \ref{balanced}, we have the  random
     subgraphs $R^i$, $i\in [n^{1.1}]$, such that,
     with probability $1-o(1)$, all $R^i$
     satisfies the properties in Lemmas~\ref{indsub} and
     ~\ref{balanced}. In particular, $H[R_i]$ is balanced with respect
     to the partition classes $Q,P$.

     Next, by Lemma~\ref{dense}, $H$ is $(\F, \varepsilon/6))$-dense,
     where
\[
     \F=\{A\subseteq V(H) : |A\cap Q|\ge (1/3-\varepsilon/8) n \mbox{
       and } |A\cap P|\ge (2/3-\varepsilon/8) n\}.
\]
Note that
	\[
		e(H)= \sum_{v\in Q}\sum_{u\in P} d_H(\{u,v\})/3\ge (n/3)(n/3)\left(\binom{n-1}{2}-\binom{2n/3}{2}-\rho n^2\right)\ge n^4/100.
	\]
	Hence by Lemma~\ref{indep} (and choosing suitable $\alpha_1,\alpha_2,
        \gamma$), we see that, with probability $1-o(1)$, for all
        $i\in [n^{1.1}]$ and for all independent sets $J$ in $H[R^i]$,
    $|J\cap P|\le (\alpha_1 +\gamma+o(1))n^{0.1}<n/3-\varepsilon^2n$
    or $|J\cap Q|\le (\alpha_2
    +\gamma+o(1))n^{0.1}<2n/3-\varepsilon^2n$

	Moreover,  by (iii) of Lemma~\ref{indsub}, with probability $1-o(1)$,
	$d_{H[R^i]}(\{u,v\})>\binom{|R^i\cap P|-1}{2}-\binom{2|R^i\cap P|/3}{2}-\xi |R^i\cap P|^2$ for all $u\in P$ and $v\in Q$.
    Hence, by Lemma~\ref{pfrac}, $H[R^i]$ contains a perfect fractional  matching for all $i\in [n^{1.1}]$.

	Thus by Lemma~\ref{secrd}, there exists a spanning subgraph $H''$ of $\bigcup_{i=1}^{n^{1.1}}H[R^i]$ such that $d_{H''}(u)\le (1+o(1))n^{0.2}$ for each $u\in S$,
   $d_{H''}(v) =(1+o(1))n^{0.2}$ for each $v\in V(H)\setminus S$, and
   $\Delta_2(H'')\le n^{0.1}$. Hence, by Lemma~\ref{nibble} (by
   setting $D=n^{0.2}$),
   $H''$ contains a cover of at most $(1+a)(n/3)$ edges, where $a$ is
   a constant satisfying $0<a<\sigma/3$.

    Now by greedily deleting intersecting edges, we obtain a matching
    of size at least $(1-3a)(n/3)$. Hence $H$ contains a matching
    covering all but at most $\sigma n$, provided $n$ is sufficiently large.
\end{proof}

\section{Conclusion}

\begin{proof}[Proof of Theorem~\ref{prefect-(1,3)}]
	By Lemma~\ref{close}, we may assume $H$ is not
        $\varepsilon$-close to any $H_{1,3}(n,n/3)$, where $\varepsilon\ll 1$.
		By Lemma~\ref{absorbing}, $H_{1,3}({\cal F})$ has  a matching $M'$ such that, for some $0<\rho'\ll \rho \ll \varepsilon$,
      $|M'|\leq \rho n/4$ and,
     for any  $S\subseteq V(H_{1,3}({\cal F}))$ with $|S|\leq \rho' n$
     and $3|S\cap Q|=|S\cap P|$, $H_{1,3}({\cal F})[S\cup V(M)]$ has a perfect matching.

	  Let $H_1= H-V(M')$. Then
     $d_{H_1}(\{u,v\})\geq {n-1\choose 2}-{2n/3\choose 2}-\rho n^2$ for all $v\in Q\cap V(H_1)$ and $u\in P\cap V(H_1)$, and
      $H_1$ is not $(2\varepsilon)$-close to $H_{1,3}(n,n/3)$.

	By Lemma~\ref{almPM}, $H_1$ contains a matching $M_1$ covering
        all but at most $\sigma n$ vertices, where we
    choose $\sigma$ so that $0<\sigma<\rho'$.
	Now	$H[(V(H_1)\setminus V(M_1))\cup V(M)]$ has a perfect matching $M_2$. Clearly, $M_1\cup M_2$ forms a perfect matching in $H$.
\end{proof}

\newpage

\end{document}